\newtheorem{theorem}{Theorem}
\newtheorem{proposition}{Proposition}
\newtheorem{lemma}{Lemma}
\newtheorem{corollary}{Corollary}
\theoremstyle{definition}
\newtheorem{definition}{Definition}
\newtheorem{remark}{Remark}
\renewcommand{\theenumi}{\alph{enumi}}
\renewcommand{\labelenumi}{{\rm \theenumi)}}
\newcommand{\CC}{\mathbb{C}}
\newcommand{\HH}{\mathbb{H}}
\newcommand{\NN}{\mathbb{N}}
\newcommand{\RR}{\mathbb{R}}
\newcommand{\EE}{\mathop{\mathbb{E}}\nolimits}
\newcommand{\PP}{\mathop{\mathbb{P}}\nolimits}
\DeclareMathOperator{\dist}{dist}
\DeclareMathOperator{\hcap}{hcap}
\DeclareMathOperator{\Hm}{hm}
\DeclareMathOperator{\Lip}{Lip}
\DeclareMathOperator*{\Res}{Res}
\title{On the continuity of half-plane capacity \\ with respect to Carath\'eodory convergence}
\author{Takuya Murayama}
\affil{\small Department of Physics, Faculty of Science and Engineering, Chuo University, Tokyo 112-8551, Japan. \\
Research Fellow of Japan Society for the Promotion of Science. \\
E-mail: \texttt{murayama@phys.chuo-u.ac.jp}}
\date{}
\begin{document}

\maketitle


\begin{abstract}
We study the continuity of half-plane capacity as a function of boundary hulls with respect to the Carath\'eodory convergence.
In particular, our interest lies in the case that hulls are unbounded.
Under the assumption that every hull is contained in a fixed hull with finite imaginary part and finite half-plane capacity,
we show that the half-plane capacity is indeed continuous.
We also discuss the extension of this result to the case that the underlying domain is finitely connected.

\medskip \noindent
\emph{MSC}(2020): Primary 60J45; Secondary 30C20, 30C85

\smallskip \noindent
\emph{Key words}: half-plane capacity, Carath\'eodory convergence, harmonic measure, Brownian motion with darning

\medskip \noindent
\emph{Acknowledgments}.
The author wishes to express his thanks to Professor Gregory Markowsky for the comments on the first manuscript, which helped the author to improve the exposition around uniform regularity and weak convergence of harmonic measures.
This research was supported by JSPS KAKENHI Grant Number JP21J00656.
\end{abstract}


\section{Introduction}
\label{sec:intro}

The \emph{half-plane capacity} is a ``capacity'' that measures \emph{$\HH$-hulls} (or \emph{boundary hulls}) growing from the boundary of the complex upper half-plane $\HH$.
This capacity has several geometric meanings~\cite{LLN09,RW14,DV14} and plays a role of time for chordal Loewner chains, that is, families of normalized conformal mappings defined in the complements of expanding (or shrinking) $\HH$-hulls.
In particular, the concept of Loewner chain is commonly utilized not just in complex analysis but also in probability theory due to the great importance of Schramm--Loewner evolution (SLE).
Roughly speaking, the SLE hull is a random curve in the half-plane with all of its loops filled-in.
Also, $\HH$-hulls of more general form appear naturally in some applications;
A recent example is the correspondence between monotone-independent increment processes and chordal Loewner chains established by Franz, Hasebe and Schlei{\ss}inger~\cite{FHS20} in non-commutative probability theory.
Under such a background, we study the continuity of half-plane capacity, as a function of $\HH$-hulls of broad class, with respect to the \emph{Carath\'eodory kernel convergence} in this paper.

To be precise, concepts studied in this paper are defined as follows:
A set $F \subset \HH$ is called an $\HH$-hull if $F$ is relatively closed in $\HH$ (i.e., $F = \overline{F} \cap \HH$) and if $\HH \setminus F$ is a simply connected domain.
Given an appropriate sequence of $\HH$-hulls,
we can define its convergence in Carath\'eodory's sense (see Definition~\ref{def:hull_continuity} in Section~\ref{sec:main1}).
Let $Z = ((Z_t)_{t \geq 0}, (\PP_z)_{z \in \HH})$
be an absorbed Brownian motion (ABM for brevity) in $\HH$.
The half-plane capacity of $F$ is defined by%
\footnote{
We adopt the usual convention that a function on $\HH$ takes value zero at the cemetery of the ABM.
The second equality in \eqref{eq:def} is understood in this sense.
}
\begin{equation}
\label{eq:def}
\hcap(F) := \lim_{y \to \infty} y \EE_{iy}\bigl[ \Im Z_{\sigma_F} \bigr] = \lim_{y \to \infty} y \EE_{iy} \bigl[ \Im Z_{\tau_{\HH \setminus F}} \bigr]
\end{equation}
as long as this limit exists.
Here, $\sigma_F$ (resp.\ $\tau_{\HH \setminus F}$) is the first hitting time (resp.\ exit time) of $Z$ to $F$ (resp.\ from $\HH \setminus F$).

Originally, the half-plane capacity appears in a purely analytic way.
Let $F$ be a \emph{bounded} $\HH$-hull.
Using Riemann's mapping theorem, we can show that
there exists a unique conformal mapping $g_F \colon \HH \setminus F \to \HH$ with  Laurent expansion
\begin{equation}
\label{eq:Laurent}
g_F(z) = z + \frac{a_F}{z} + o(z^{-1}),
\quad z \to \infty,
\end{equation}
around the point at infinity.
($g_F$ is sometimes called the mapping-out function of $F$.)
The non-negative constant $a_F$ is exactly the half-plane capacity of $F$.
We note that
\[
a_F = -\Res_{z=\infty} g_F(z) \, dz = \Res_{z=\infty} g_F^{-1}(z) \, dz.
\]

If hulls are assumed to be uniformly bounded, then the continuity of half-plane capacity is proved quite easily.
Let $F_n$, $n \in \NN \cup \{\infty\}$, be $\HH$-hulls that are contained in a disk $B(0, \rho)$ with center $0$ and radius $\rho$.
We suppose that $F_n$ converges to $F_\infty$ in Carath\'eodory's sense.
Using a version of the Carath\'eodory kernel theorem~\cite[Theorem~3.8]{Mu19spa} and taking the Schwarz reflection of $g_{F_n}$'s across the real axis,
we can show that $g_{F_n}(z)$ converges to $g_{F_\infty}(z)$ uniformly in $z \in \partial B(0, \rho)$.
By \eqref{eq:Laurent} we have
\[
\lim_{n \to \infty} a_{F_n}
= \lim_{n \to \infty} \frac{1}{2\pi i} \int_{\lvert z \rvert = \rho} g_{F_n}(z) \, dz
= \frac{1}{2\pi i} \int_{\lvert z \rvert = \rho} g_{F_\infty}(z) \, dz
= a_{F_\infty}.
\]

In contrast to the preceding case,
the continuity of half-plane capacity fails if hulls are allowed to be unbounded.
In this case, $\hcap(F)$ should be the \emph{angular} residue of $g_F^{-1}$ at infinity,
but the angular residue is not a continuous functional of $g_F^{-1}$
(see Goryainov and Ba~\cite[p.1211]{GB92}).
In order to retrieve the continuity of half-plane capacity,
some restriction must be imposed on $\HH$-hulls.

In this paper, we assume that our boundary hulls are contained in a fixed hull with finite imaginary part and finite half-plane capacity.
Under this assumption,
our main result is stated as follows:
\begin{theorem}
\label{thm:main1}
Let $\tilde{F}$ be an $\HH$-hull with
\[
\Im \tilde{F} := \sup \{\, \Im z \mathrel{;} z \in \tilde{F} \,\} < \infty \quad \text{and} \quad \hcap(\tilde{F})<\infty.
\]
Suppose that $\HH$-hulls $F_n$, $n \in \NN$, and $F_\infty$ are all contained in $\tilde{F}$
and that $F_n$ converges to $F_\infty$ as $n \to \infty$ (in the sense of Definition~\ref{def:hull_continuity}).
Then
\begin{equation}
\label{eq:main1}
\lim_{n \to \infty} \hcap(F_n) = \hcap(F_\infty).
\end{equation}
\end{theorem}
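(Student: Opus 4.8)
The plan is to work with the probabilistic definition~\eqref{eq:def} and exploit the containment $F_n, F_\infty \subset \tilde F$ to control everything uniformly. For a fixed large $y$, write $h_n(z) := y\,\EE_z[\Im Z_{\sigma_{F_n}}]$ for the bounded harmonic function on $\HH \setminus F_n$ with boundary values $y\,\Im(\cdot)$ on $\partial F_n$ and $0$ on $\RR$; then $\hcap(F_n) = \lim_{y\to\infty} h_n(iy)$. The key structural observation is that, since $\Im \tilde F < \infty$, all the relevant action happens inside a horizontal strip $S := \{\, 0 < \Im z < M \,\}$ with $M := \Im\tilde F + 1$; above height $M$ the function $\Im Z_{\sigma_{F_n}}$ is, by the strong Markov property applied at the first hitting of the line $\{\Im z = M\}$, an average of the values of $h_n$ on that line. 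This lets me compare $\hcap(F_n)$ to an integral of the harmonic measure of $F_n$ (seen from a point on $\{\Im z = M\}$) against $\Im(\cdot)$, with a correction term that is explicitly controlled by $\hcap(\tilde F) < \infty$ uniformly in $n$.

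The main analytic step is then a harmonic-measure convergence statement: Carath\'eodory convergence $F_n \to F_\infty$ together with the uniform containment in $\tilde F$ forces the harmonic measures $\Hm_{z}(\cdot, \HH\setminus F_n)$, viewed from a fixed base point $z$ with $\Im z > M$, to converge weakly to $\Hm_{z}(\cdot, \HH\setminus F_\infty)$ as measures on $\overline{\HH}$ (equivalently, the laws of $Z_{\sigma_{F_n}}$ under $\PP_z$ converge weakly). This is where I expect the real work to be, and it is presumably what the acknowledgment to Markowsky about ``uniform regularity and weak convergence of harmonic measures'' is pointing at: one needs the boundaries $\partial F_n$ to be uniformly regular for the Dirichlet problem in a suitable quantitative sense so that no mass escapes to the ``wrong'' boundary, and one needs the Carath\'eodory convergence to pin down the limit. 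I would establish this by a Brownian-motion argument — coupling the ABMs started from $z$, using that $\HH\setminus F_n \to \HH\setminus F_\infty$ as open sets (kernel convergence) to get convergence of hitting times on compacts, and using the strip confinement plus a Beurling-type estimate to rule out loss of mass near infinity. Since $\Im(\cdot)$ is bounded by $M$ on the support of all these measures, weak convergence upgrades to convergence of the integrals $\EE_z[\Im Z_{\sigma_{F_n}}]$.

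With the harmonic-measure convergence in hand, I would assemble the proof as follows. First fix $y$ large (with $iy$ above the strip) and show $\lim_n \bigl( y\,\EE_{iy}[\Im Z_{\sigma_{F_n}}] \bigr) = y\,\EE_{iy}[\Im Z_{\sigma_{F_\infty}}]$, using the weak convergence above together with the Markov decomposition at height $M$ so that the contribution of the part of the path above height $M$ is handled by the same mechanism. Next I need uniformity in $y$: I would show that $y\,\EE_{iy}[\Im Z_{\sigma_F}]$ converges to $\hcap(F)$ as $y \to \infty$ \emph{at a rate depending only on $\tilde F$} (not on the particular hull $F \subset \tilde F$), for instance by the comparison $F \subset \tilde F$, which gives $\Im Z_{\sigma_F} \leq$ (a quantity dominated by the $\tilde F$-excursion) in an appropriate stochastic sense, so that $\bigl| y\,\EE_{iy}[\Im Z_{\sigma_F}] - \hcap(F) \bigr| \le \varepsilon(y)$ with $\varepsilon(y) \to 0$ uniformly over $F \subset \tilde F$. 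Then~\eqref{eq:main1} follows from a standard $3\varepsilon$-argument: choose $y$ so that the $\varepsilon(y)$ bound is small for $F_n$ and $F_\infty$ simultaneously, then let $n \to \infty$ at that fixed $y$. The place where I anticipate having to be most careful is exactly the uniform-in-$y$ tail estimate combined with the regularity needed for weak convergence of harmonic measures — making sure that the hypothesis $\hcap(\tilde F) < \infty$ (rather than merely $\tilde F$ bounded) is genuinely enough to prevent the kind of discontinuity exhibited in the Goryainov--Ba example.
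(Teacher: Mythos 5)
Your proposal is correct and follows essentially the same route as the paper: the strong Markov decomposition at a horizontal line above $\Im\tilde F$, the weak convergence of harmonic measures $\Hm_{\HH\setminus F_n}(\zeta,\cdot)\to\Hm_{\HH\setminus F_\infty}(\zeta,\cdot)$ obtained from Carath\'eodory convergence plus uniform regularity (which is automatic for simply connected domains, by Markowsky's lemma), and the domination $\EE_{\xi+i\eta}[\Im Z_{\sigma_{F}}]\le\EE_{\xi+i\eta}[\Im Z_{\sigma_{\tilde F}}]$ with the latter integrable in $\xi$ precisely because $\hcap(\tilde F)<\infty$. The only cosmetic difference is that the paper first passes to the limit $y\to\infty$ to get the clean formula $\hcap(F)=\tfrac{1}{\pi}\int_\RR\EE_{\xi+i\eta}[\Im Z_{\sigma_F}]\,d\xi$ and then applies dominated convergence in $\xi$, whereas you keep $y$ finite and run a three-epsilon argument with a rate uniform over $F\subset\tilde F$; these are equivalent reorganizations of the same estimate.
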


For the proof of Theorem~\ref{thm:main1},
we make a full use of the probabilistic definition~\eqref{eq:def} of half-plane capacity instead of (angular) residue.
The merit of our method is that the hitting probability of the ABM to a hull $F$ is given by the harmonic measure of $\HH \setminus F$.
In fact, the Carath\'edory convergence implies the weak convergence of harmonic measures under a certain assumption,
which is a key result to proving \eqref{eq:main1}.
Modifying the original argument of Binder, Rojas and Yampolsky~\cite{BRY19},
we provide a proof of this result in Appendix~\ref{sec:bry}.

After the proof of Theorem~\ref{thm:main1}, we discuss its extension to \emph{finitely connected domains}.
Let $D$ be a \emph{parallel slit half-plane}, namely, the upper half-plane with some line segments parallel to the real axis removed.
For an $\HH$-hull $F$ in $D$, its \emph{half-plane capacity} $\hcap^D(F)$ \emph{relative to} $D$ is defined by replacing ABM with \emph{Brownian motion with darning} (BMD for short) in the expression~\eqref{eq:def}.
Such replacement is naturally considered in the study of the Komatu--Loewner differential equation;
See a series of recent papers~\cite{CFR16,CF18,CFS17}.
We prove the continuity of $\hcap^D$ so defined under such assumptions as in Theorem~\ref{thm:main1} (see Theorem~\ref{thm:main2} in Section~\ref{sec:BMD_hcap}).

\begin{remark} \label{rem:jw}
The question of the continuity of half-plane capacity originally arose from the joint work~\cite{HHMS21+}, which is in progress.
In this work, a very similar result is proved by means of angular residues and an integral formula for conformal mappings.
(We shall mention the relation of Theorem~\ref{thm:main1} to this result again in Remark~\ref{rem:jw2} in Section~\ref{sec:rem2}.)
Thus, our main contribution lies in the simple probabilistic method, which works  for finitely connected domains as well, rather than the results themselves.
\end{remark}

The rest of this paper is organized as follows:
Section~\ref{sec:sc} is devoted to the case that the underlying domain is simply connected, namely, the upper half-plane.
In Section~\ref{sec:main1}, we provide the definition of Carath\'eodory's convergence and prove Theorem~\ref{thm:main1}.
In Section~\ref{sec:mono}, we show that $\hcap$ is strictly monotone (with respect to the inclusion relation of $\HH$-hulls)
and give a partial converse of Theorem~\ref{thm:main1} for a monotone sequence of hulls.
Section~\ref{sec:fc} is devoted to the case that the underlying domain is finitely connected, namely, a parallel slit half-plane.
In Section~\ref{sec:BMD_hcap}, we generalize the definition of half-plane capacity in terms of BMD.
The proof of Theorem~\ref{thm:main2} is done through Sections~\ref{sec:mc} and \ref{sec:ur}.
The final section, Section~\ref{sec:cc_rem}, is devoted to some remarks on the relation of our results to geometric function theory.
(The reader can read Section~\ref{sec:cc_rem} independently of Section~\ref{sec:fc}.)
There are two appendices.
In Appendix~\ref{sec:appendix}, we prove a lemma on some hitting probability needed in the proof of Proposition~\ref{prop:ur}.
In Appendix~\ref{sec:bry}, we prove the above-mentioned fact on the weak convergence of harmonic measures.

\section{Study on the upper half-plane}
\label{sec:sc}

In this section, we study boundary hulls in the upper half-plane $\HH$.
Our main result, Theorem~\ref{thm:main1}, is proved after some definitions are given.
We also discuss the case that a sequence of hulls is monotone.
Most of the results in this section will be carried over into the setting of Section~\ref{sec:fc}.

\subsection{Basic definitions and proof of Theorem~\ref{thm:main1}}
\label{sec:main1}

\begin{definition}[Convergence of domains]
\label{def:kernel}
Let $D_n$, $n \in \NN$, and $D_\infty$ be domains in the complex plane $\CC$ which have a point $z_0 \in \CC$ in common.
It is said that the sequence $(D_n)_{n \in \NN}$
\emph{converges} to $D_\infty$ \emph{in the kernel sense}
or \emph{in Carath\'eodory's sense}
with respect to the reference point $z_0$
if the following hold:
\begin{itemize}
\item
Each compact subset $K$ of $D_\infty$ is a subset of $D_n$ for all but finitely many $n$;

\item
If a domain $U$ contains $z_0$ and is a subset of $D_n$ for infinitely many $n$, then $U \subset D_\infty$.
\end{itemize}
\end{definition}

In Definition~\ref{def:kernel}, we have skipped the definition of ``kernel'' and defined the kernel convergence directly.
The equivalence of Definition~\ref{def:kernel} to the original definition (see for instance Roseblum and Rovnyak~\cite[\S7.9]{RR94}) is easy to check and left to the interested reader.

\begin{definition}[Convergence of $\mathbb{H}$-hulls]
\label{def:hull_continuity}
Suppose that $\HH$-hulls $F_n$, $n \in \NN$, and $F_\infty$ are contained in another hull $\tilde{F}$.
We say that the sequence $(F_n)_{n \in \NN}$ \emph{converges} to $F_\infty$ (in Carath\'eodory's sense)
if the complement $\HH \setminus F_n$ converges to $\HH \setminus F_\infty$ as $n \to \infty$ in the kernel sense with respect to some $z_0 \in \HH \setminus \tilde{F}$.
(This definition is independent of
the choice of the reference point $z_0$.)
\end{definition}

We use the following notation:
$Z = ((Z_t)_{t \geq 0}, (\PP_z)_{z \in \HH})$
is an ABM in $\HH$.
For a set $B \subset \HH$,
the symbol $\sigma_B$ (resp.\ $\tau_B$) denotes the first hitting time (resp.\ exit time) of $Z$ to $B$ (resp.\ from $B$).
For a domain $D$,
the expression
\[
\Hm_D(z, B) := \PP_z(Z_{\tau_D-} \in B)
\]
defines the \emph{harmonic measure} of $B$ in $D$ seen from a point $z$.
It is regarded as a Borel measure on $\CC$.
The value of any function at the cemetery of $Z$ is set to be zero.

Let us begin the proof of Theorem~\ref{thm:main1}.
The following observation comes from the expression of half-plane capacity mentioned by Lalley, Lawler and Narayanan~\cite{LLN09}:
For an $\HH$-hull $F$ with $\Im F<\infty$,
let $y > \eta > \Im F$ and $\HH_\eta := \{\, z \in \CC \mathrel{;} \Im z > \eta \,\}$.
Then
\begin{align*}
\EE_{iy} \left[ \Im Z_{\sigma_F} \right]
&= \EE_{iy} \left[ \EE_{Z_{\tau_{\HH_\eta}}} \! \left[ \Im Z_{\sigma_F} \right] \right]
= \int_{\partial \HH_\eta} \EE_\zeta \left[ \Im Z_{\sigma_F} \right] \Hm_{\HH_\eta}(iy, d\zeta) \\
&= \frac{1}{\pi} \int_\RR \EE_{\xi+i\eta} \left[ \Im Z_{\sigma_F} \right] \frac{y-\eta}{\xi^2 + (y-\eta)^2} \, d\xi.
\end{align*}
Hence we have
\begin{equation}
\label{eq:pre_LLN}
y \EE_{iy} \left[ \Im Z_{\sigma_F} \right]
= \frac{1}{\pi} \int_\RR \EE_{\xi+i\eta} \left[ \Im Z_{\sigma_F} \right] \frac{y(y-\eta)}{\xi^2 + (y-\eta)^2} \, d\xi
\quad \text{for}\ y > \eta > \Im F.
\end{equation}

\begin{proposition}[Expression of $\hcap$]
\label{prop:LLN}
Let $F$ be an $\HH$-hull with $\Im F<\infty$ and $\hcap(F)<\infty$.
Then, for any $\eta > \Im F$,
\begin{equation}
\label{eq:LLN}
\hcap(F)
=\frac{1}{\pi} \int_\RR \EE_{\xi+i\eta} \left[ \Im Z_{\sigma_F} \right] d\xi.
\end{equation}
\end{proposition}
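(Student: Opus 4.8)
The plan is to let $y \to \infty$ in the identity \eqref{eq:pre_LLN} that precedes the statement. Its left-hand side tends to $\hcap(F)$ by the definition \eqref{eq:def}, which is legitimate precisely because we assume $\hcap(F) < \infty$. On the right-hand side, the kernel $\frac{y(y-\eta)}{\xi^2 + (y-\eta)^2}$ converges, for each fixed $\xi \in \RR$, to the constant $1$ as $y \to \infty$, so one expects the right-hand side to tend to $\frac{1}{\pi}\int_\RR \EE_{\xi+i\eta}[\Im Z_{\sigma_F}]\,d\xi$. The whole content of the proof is therefore to justify exchanging this limit with the integral; the kernel $\frac{1}{\pi}\frac{y(y-\eta)}{\xi^2+(y-\eta)^2}$ plays the role of a Poisson-type density which, unlike the usual Poisson kernel, flattens out to the uniform density as $y \to \infty$ rather than concentrating.

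First I would record that $\xi \mapsto \EE_{\xi+i\eta}[\Im Z_{\sigma_F}]$ is nonnegative and, in fact, bounded by $\Im F$: when $Z$ hits $F$ we have $Z_{\sigma_F} \in F$ and hence $\Im Z_{\sigma_F} \leq \Im F$, while on the complementary event the cemetery convention makes the integrand vanish. The function is moreover harmonic, hence continuous and Borel, on the open set $\HH \setminus F$, which contains the line $\{\Im z = \eta\}$ because $\eta > \Im F$. The crucial step — and the only place where the hypothesis $\hcap(F) < \infty$ is used — is integrability over $\RR$: applying Fatou's lemma to the nonnegative integrands in \eqref{eq:pre_LLN} and using the pointwise convergence of the kernel to $1$ gives
\[
\frac{1}{\pi}\int_\RR \EE_{\xi+i\eta}[\Im Z_{\sigma_F}]\,d\xi \leq \liminf_{y\to\infty} y\,\EE_{iy}[\Im Z_{\sigma_F}] = \hcap(F) < \infty,
\]
so the integrand in \eqref{eq:LLN} lies in $L^1(\RR)$. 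I expect this to be the main (indeed only) obstacle.

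With integrability in hand the conclusion is a routine dominated-convergence argument. The degenerate case $F = \emptyset$, in which both sides of \eqref{eq:LLN} vanish, can be set aside, so that $\eta > \Im F > 0$. For every $y \geq 2\eta$ one has $y - \eta \geq y/2$, whence
\[
0 \leq \frac{y(y-\eta)}{\xi^2 + (y-\eta)^2} \leq \frac{y}{y-\eta} \leq 2,
\]
so the integrands in \eqref{eq:pre_LLN} are dominated, uniformly in $y \geq 2\eta$, by the $L^1(\RR)$ function $2\,\EE_{\xi+i\eta}[\Im Z_{\sigma_F}]$. Letting $y \to \infty$ in \eqref{eq:pre_LLN} and invoking the dominated convergence theorem then turns it into \eqref{eq:LLN}, completing the proof.
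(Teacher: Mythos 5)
Your proposal is correct and follows essentially the same route as the paper's proof: Fatou's lemma applied to \eqref{eq:pre_LLN} gives integrability of $\xi \mapsto \EE_{\xi+i\eta}[\Im Z_{\sigma_F}]$ from the hypothesis $\hcap(F)<\infty$, and the uniform bound $\frac{y(y-\eta)}{\xi^2+(y-\eta)^2} \le \frac{y}{y-\eta} \le 2$ for $y \ge 2\eta$ then lets dominated convergence finish the argument. The extra observations (boundedness by $\Im F$, the empty-hull case) are harmless but not needed.
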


\begin{proof}
We apply Fatou's lemma to \eqref{eq:pre_LLN}:
\[
\frac{1}{\pi} \int_\RR \EE_{\xi+i\eta} \left[ \Im Z_{\sigma_F} \right] d\xi
\leq \liminf_{y \to \infty} y \EE_{iy} \left[\Im Z_{\sigma_F}\right]
= \hcap(F) < \infty.
\]
Hence the function
$\xi \mapsto \EE_{\xi+i\eta} \left[ \Im Z_{\sigma_F} \right]$
is integrable. Moreover,
\begin{equation}
\label{eq:Poi_dom}
\frac{y(y-\eta)}{\xi^2 + (y-\eta)^2} \leq \frac{y}{y-\eta} < 2
\quad \text{for all}\ \xi \in \RR\ \text{and}\ y > 2\eta.
\end{equation}
The dominated convergence theorem thus applies to \eqref{eq:pre_LLN},
which yields \eqref{eq:LLN}.
\end{proof}

\begin{proposition}[Weak monotonicity]
\label{prop:mono}
Let $F$ and $\tilde{F}$ be $\HH$-hulls with $F \subset \tilde{F}$.
If $\tilde{F}$ satisfies $\Im \tilde{F}<\infty$ and $\hcap(\tilde{F})<\infty$,
then the limit $\hcap(F)$ exists and enjoys
\begin{equation}
\label{eq:mono}
\hcap(F) \leq \hcap(\tilde{F}).
\end{equation}
\end{proposition}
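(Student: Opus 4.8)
The plan is to deduce both the existence of $\hcap(F)$ and the inequality~\eqref{eq:mono} from a single pointwise comparison,
\[
\EE_z[\Im Z_{\sigma_F}] \le \EE_z[\Im Z_{\sigma_{\tilde F}}] \qquad (z \in \HH),
\]
fed into the integral identity~\eqref{eq:pre_LLN} and the dominated-convergence argument already used for Proposition~\ref{prop:LLN}. The key analytic input is that $\Im Z$ is well behaved as a process: with the cemetery convention one has $\Im Z_t = \Im Z_{t\wedge\tau_\HH}$, which is the imaginary part of the planar Brownian motion stopped upon absorption on $\RR$, hence a non-negative $\PP_z$-martingale started at $\Im z$. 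Applying optional stopping at the bounded times $\sigma_F\wedge t$ and then letting $t\to\infty$ through Fatou's lemma gives
\[
\EE_z[\Im Z_{\sigma_F}] \le \Im z \qquad (z \in \HH),
\]
and the same bound holds with $\tilde F$ in place of $F$.

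Next I would bring in the inclusion $F\subset\tilde F$, which yields $\sigma_F\ge\sigma_{\tilde F}$ and in particular forces $\sigma_F=\infty$ on $\{\sigma_{\tilde F}=\infty\}$. The strong Markov property of $Z$ at $\sigma_{\tilde F}$ then gives, for every $z\in\HH$,
\[
\EE_z[\Im Z_{\sigma_F}] = \EE_z\bigl[\EE_{Z_{\sigma_{\tilde F}}}[\Im Z_{\sigma_F}];\,\sigma_{\tilde F}<\infty\bigr] \le \EE_z\bigl[\Im Z_{\sigma_{\tilde F}};\,\sigma_{\tilde F}<\infty\bigr] = \EE_z[\Im Z_{\sigma_{\tilde F}}],
\]
where the middle inequality applies the previous display at the (random) point $\zeta=Z_{\sigma_{\tilde F}}\in\HH$ and the last equality uses $\Im Z_{\sigma_{\tilde F}}=0$ on $\{\sigma_{\tilde F}=\infty\}$. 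This is the comparison claimed above.

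To conclude, fix $\eta>\Im\tilde F\ge\Im F$. Putting $z=\xi+i\eta$ in the comparison and integrating over $\xi\in\RR$, Proposition~\ref{prop:LLN} applied to $\tilde F$ shows
\[
\int_\RR \EE_{\xi+i\eta}[\Im Z_{\sigma_F}]\,d\xi \le \int_\RR \EE_{\xi+i\eta}[\Im Z_{\sigma_{\tilde F}}]\,d\xi = \pi\,\hcap(\tilde F) < \infty,
\]
so $\xi\mapsto\EE_{\xi+i\eta}[\Im Z_{\sigma_F}]$ is integrable. Now apply~\eqref{eq:pre_LLN} to $F$: for $y>2\eta$ the integrand there is dominated by $2\,\EE_{\xi+i\eta}[\Im Z_{\sigma_F}]$ by~\eqref{eq:Poi_dom}, while the Poisson-kernel factor tends to $1$ as $y\to\infty$, so dominated convergence shows that $\hcap(F)=\lim_{y\to\infty}y\,\EE_{iy}[\Im Z_{\sigma_F}]$ exists and equals $\frac1\pi\int_\RR\EE_{\xi+i\eta}[\Im Z_{\sigma_F}]\,d\xi$. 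Comparing this with the corresponding integral for $\tilde F$ gives~\eqref{eq:mono}.

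The step I expect to require the most care is the bound $\EE_z[\Im Z_{\sigma_F}]\le\Im z$. Since $(\Im Z_t)$ is a non-negative martingale that converges to $0$ almost surely while keeping expectation $\Im z$, it is not uniformly integrable; consequently one genuinely needs Fatou's lemma (not an optional-stopping identity) at the limit $t\to\infty$, and the inequality is in general strict. The only other place needing attention is the cemetery bookkeeping on $\{\sigma_{\tilde F}=\infty\}$ in the strong Markov step, which is innocuous here since the relevant integrand vanishes on that event.
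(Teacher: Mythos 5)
Your proof is correct and follows essentially the same route as the paper: establish the pointwise comparison $\EE_z[\Im Z_{\sigma_F}]\le\EE_z[\Im Z_{\sigma_{\tilde F}}]$, deduce integrability in $\xi$ from Proposition~\ref{prop:LLN} applied to $\tilde F$, and pass to the limit in \eqref{eq:pre_LLN} by dominated convergence using \eqref{eq:Poi_dom}. The only difference is cosmetic: the paper gets the comparison by optional sampling for the non-negative supermartingale $\Im Z$ closed by its last element $0$ at the ordered stopping times $\sigma_{\tilde F}\le\sigma_F$, whereas you unwind the same fact by hand via the strong Markov property at $\sigma_{\tilde F}$ together with the Fatou bound $\EE_\zeta[\Im Z_{\sigma_F}]\le\Im\zeta$.
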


\begin{proof}
The process $(\Im Z_t)_{t \geq 0}$ is just a one-dimensional Brownian motion stopped when it hits the origin.
Hence it is a non-negative martingale.
Since $\lim_{t \to \infty} \Im Z_t = 0$ a.s., it is also a supermartingle with last element zero.
Thus, the optional sampling theorem implies that
\begin{equation}
\label{eq:ost}
\EE_{\xi+i\eta} \left[\Im Z_{\sigma_F}\right]
\leq \EE_{\xi+i\eta} \left[\Im Z_{\sigma_{\tilde{F}}}\right]
\quad \text{for}\ \xi \in \RR\ \text{and}\ \eta > \Im \tilde{F}.
\end{equation}
The right-hand side of \eqref{eq:ost} is integrable as a function of $\xi$ by Proposition~\ref{prop:LLN}.
Thus, by virtue of \eqref{eq:Poi_dom}, the dominated convergence theorem works in \eqref{eq:pre_LLN}.
This yields \eqref{eq:mono}.
\end{proof}

We now consider $\HH$-hulls $F_n$, $n \in \NN \cup \{\infty\}$, in Theorem~\ref{thm:main1}.
The next proposition then applies to the complementary domains $\HH \setminus F_n$.

\begin{proposition}[Uniform regularity of simply connected domains]
\label{prop:ur_sc}
Simply connected domains none of which is $\CC$ are uniformly regular.
Namely, for any $\varepsilon>0$ there exists $\delta>0$ such that every simply connected domain $D \subsetneq \CC$ satisfies
\[
\Hm_D(z, B(z,\varepsilon))>1-\varepsilon
\quad \text{for any}\ z \in D\ \text{with}\ \dist(z, \partial D) < \delta.
\]
Here, $B(z,\varepsilon)$ denotes the disk with center $z$ and radius $\varepsilon$, and $\dist(z, \partial D) := \inf\{\, \lvert z-w \rvert \mathrel{;} w \in \partial D\,\}$.
\end{proposition}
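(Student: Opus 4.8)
The plan is to bound, uniformly over the domain, the harmonic measure of the part of $\partial D$ lying outside $B(z,\varepsilon)$, by controlling the probability that the process $Z$ travels a distance $\varepsilon$ inside $D$ before exiting; the tool for this is Beurling's projection theorem. Fix $\varepsilon>0$; I will produce $\delta=\delta(\varepsilon)\in(0,\varepsilon]$ that works simultaneously for every simply connected $D\subsetneq\CC$. Given such a $D$ and a point $z\in D$ with $d:=\dist(z,\partial D)<\delta$, note $d>0$ and choose $w\in\partial D$ with $\lvert w-z\rvert=d$, so $w\in(\CC\setminus D)\cap B(z,\varepsilon)$. Since $Z$ starts at $z\in B(z,\varepsilon)$, on the event $\{Z_{\tau_D-}\notin B(z,\varepsilon)\}$ the path must cross the sphere $\partial B(z,\varepsilon)$ no later than time $\tau_D$, so
\[
\Hm_D(z,\CC\setminus B(z,\varepsilon))\ \le\ \PP_z\bigl[\,\sigma_{\partial B(z,\varepsilon)}\le\tau_D\,\bigr].
\]

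The crucial geometric input is that the ``obstacle'' $E:=(\CC\setminus D)\cap\overline{B(z,\varepsilon)}$ has \emph{full radial projection about $z$}: $E\cap\partial B(z,s)\neq\emptyset$ for every $s\in(d,\varepsilon]$. Here is where simple connectivity enters. Writing $\widehat{\CC}:=\CC\cup\{\infty\}$, simple connectivity of $D\subsetneq\CC$ is equivalent to connectedness of the compact set $\widehat{\CC}\setminus D$, which contains both $w$ and $\infty$. For each $s\in(d,\varepsilon]$ this connected set meets the open set $B(z,s)$ (it contains $w$) and is not contained in $\overline{B(z,s)}$ (it contains $\infty$), hence meets $\partial B(z,s)$; the intersection point is finite and so lies in $E$. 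I stress that $\CC\setminus D$ itself need not be connected for a simply connected $D$ — consider the complement of two collinear rays — so passing to $\widehat{\CC}$ is essential; but, importantly, only the radial projection of the complement, and not a connected subarc of it, is what will be needed.

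Beurling's projection theorem then applies: among all closed sets $E\subset\overline{B(z,\varepsilon)}$ whose radial projection about $z$ contains $[d,\varepsilon]$, the probability that Brownian motion from $z$ reaches $\partial B(z,\varepsilon)$ before hitting $E$ is largest when $E$ is the radial segment $\{\,z+te^{i\theta_0}\mathrel{;}d\le t\le\varepsilon\,\}$, and for that segment the probability equals an explicit function of $d/\varepsilon$ that is bounded by $C\sqrt{d/\varepsilon}$ for a universal constant $C$ (one checks this, e.g., by a square-root uniformization, exactly as for $D=\CC\setminus[0,\infty)$). Consequently $\PP_z[\sigma_{\partial B(z,\varepsilon)}\le\tau_D]\le C\sqrt{d/\varepsilon}$. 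Moreover, since $\widehat{\CC}\setminus D$ is connected with more than one point, $\CC\setminus D$ is non-polar, so $Z$ exits $D$ at a finite point almost surely and $\Hm_D(z,\CC)=1$; hence
\[
\Hm_D(z,B(z,\varepsilon))\ =\ 1-\Hm_D(z,\CC\setminus B(z,\varepsilon))\ \ge\ 1-C\sqrt{d/\varepsilon}\ >\ 1-C\sqrt{\delta/\varepsilon}.
\]
It now suffices to take $\delta:=\tfrac12\,\varepsilon\min\{1,\varepsilon^2/C^2\}$, which makes the right-hand side exceed $1-\varepsilon$ and also guarantees $\delta\le\varepsilon$, as was used above.

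The one genuinely delicate point is invoking Beurling's projection theorem in precisely the form that requires only a full radial projection of the complement — rather than connectedness of the complement or that it separate $z$ from $\partial B(z,\varepsilon)$ — since it is exactly this that makes $\delta$ depend on $\varepsilon$ alone and not on $D$; everything else is bookkeeping, plus the elementary but easily mishandled inclusions for the exit point and the non-polarity statement. As a more computational alternative one can avoid Beurling altogether by uniformizing $D$ with a Riemann map $\varphi$ of the unit disc onto $D$ with $\varphi(0)=z$, using the Koebe one-quarter theorem ($\lvert\varphi'(0)\rvert\le 4d$) and the Koebe distortion theorem to control the image of a small disc and thereby estimate the harmonic measure directly; this yields the same conclusion, uniformly in $D$.
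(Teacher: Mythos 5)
Your proof is correct. Note, however, that the paper does not actually prove Proposition~\ref{prop:ur_sc}: it defers to Markowsky's Lemma~1, whose ``short and elegant probabilistic'' argument is a winding/loop argument (with high probability the Brownian path traces a closed loop in the annulus $B(z,\varepsilon)\setminus B(z,\delta)$ surrounding $B(z,\delta)$; since $\widehat{\CC}\setminus D$ is connected and contains both a point of $B(z,\delta)$ and $\infty$, it must meet that loop, forcing the path to exit $D$ before leaving $B(z,\varepsilon)$). Your route replaces the loop argument by Beurling's projection theorem: you extract the same topological input (connectedness of the spherical complement, hence full circular projection of $\CC\setminus D$ onto $[d,\varepsilon]$) and then quote the explicit $\tfrac{2}{\pi}\arctan$ estimate for a radial slit. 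Both arguments hinge on exactly the same use of simple connectivity, and your care about passing to $\widehat{\CC}$ (the complement in $\CC$ need not be connected) and about $\Hm_D(z,\CC)=1$ via non-polarity is exactly the right bookkeeping. What your version buys is a quantitative rate $\delta\asymp\varepsilon^3$ (via the $C\sqrt{d/\varepsilon}$ bound), and it uses precisely the tool the paper itself deploys later in the proof of Proposition~\ref{prop:ur}, case~b), so it integrates seamlessly; what it costs is reliance on Beurling's theorem, which the winding argument avoids. Either way the statement is established uniformly over all simply connected $D\subsetneq\CC$, as required.
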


There are several proofs of Proposition~\ref{prop:ur_sc}, and we refer the reader to Markowsky~\cite[Lemma~1]{Ma18} for a short and elegant probabilistic one.
Using this proposition, we prove the following:

\begin{proposition}
\label{prop:EE_conv}
Under the assumption of Theorem~\ref{thm:main1},
it holds that
\begin{equation}
\label{eq:EE_conv}
\lim_{n \to \infty} \EE_\zeta \left[ \Im Z_{\sigma_{F_n}} \right]
= \EE_\zeta \left[ \Im Z_{\sigma_{F_\infty}} \right]
\quad \text{for every}\ \zeta \in \HH \setminus \tilde{F}.
\end{equation}
\end{proposition}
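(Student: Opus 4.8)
The plan is to write $\EE_\zeta\bigl[\Im Z_{\sigma_{F_n}}\bigr]$ as the integral of one fixed bounded continuous function against the harmonic measure $\mu_n := \Hm_{\HH\setminus F_n}(\zeta,\cdot)$, and then to let $n\to\infty$ using the weak convergence of harmonic measures proved in Appendix~\ref{sec:bry}.

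First I would record the representation. Set $M := \Im\tilde F < \infty$ and fix a bounded continuous $\psi\colon\CC\to[0,M]$ with $\psi(w)=\Im w$ whenever $0\le\Im w\le M$; for instance $\psi(w):=\min\{(\Im w)^{+},M\}$. Since $F_n$ is relatively closed in $\HH$ and contained in $\tilde F$, the domain $\HH\setminus F_n=\HH\setminus\overline{F_n}$ is open and $\partial(\HH\setminus F_n)\subset\RR\cup F_n\subset\RR\cup\tilde F$, a set on which $\psi$ coincides with $\Im(\cdot)$ (using the convention $\Im\equiv 0$ at the cemetery, and noting $\tilde F\subset\HH$, so $0<\Im w\le M$ on $\tilde F$). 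Recalling that $Z$ leaves $\HH\setminus F_n$ either by hitting $F_n$ or by reaching $\RR$, so that $\tau_{\HH\setminus F_n}=\sigma_{F_n}\wedge\tau_\HH$, one gets for every $n\in\NN\cup\{\infty\}$
\[
\EE_\zeta\bigl[\Im Z_{\sigma_{F_n}}\bigr]=\EE_\zeta\bigl[\psi(Z_{\tau_{\HH\setminus F_n}-})\bigr]=\int_\CC\psi\,d\mu_n .
\]

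Next I would pass to the limit. By Definition~\ref{def:hull_continuity} the domains $\HH\setminus F_n$ converge to $\HH\setminus F_\infty$ in the kernel sense with respect to $\zeta\in\HH\setminus\tilde F$, and by Proposition~\ref{prop:ur_sc} the simply connected domains $\HH\setminus F_n$ are uniformly regular; hence the result of Appendix~\ref{sec:bry} gives $\mu_n\to\mu_\infty$ weakly. The genuinely substantial input is this weak convergence; granting it, the only delicate point in the present proof — and the point where the hypothesis $\Im\tilde F<\infty$ enters — is that $\psi$, though bounded, is not compactly supported, so one must also control the mass of $\mu_n$ near $\infty$ uniformly in $n$. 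This is where the finite lifetime of the ABM is used: picking $\chi_R\in C_c(\CC)$ with $0\le\chi_R\le1$ and $\chi_R\equiv1$ on $B(0,R)$, the function $\psi\chi_R$ is continuous with compact support, so $\int\psi\chi_R\,d\mu_n\to\int\psi\chi_R\,d\mu_\infty$, while, because $Z_{\sigma_{F_n}}\in\tilde F$ on $\{\sigma_{F_n}<\tau_\HH\}$ and $\psi$ vanishes on $\RR$,
\[
\Bigl|\int_\CC\psi(1-\chi_R)\,d\mu_n\Bigr|\le M\,\mu_n\bigl(\tilde F\setminus B(0,R)\bigr)\le M\,\PP_\zeta\Bigl(\sup_{t<\tau_\HH}\lvert Z_t\rvert\ge R\Bigr)=:M\varepsilon_R ,
\]
where $\varepsilon_R$ is independent of $n$ and $\varepsilon_R\to0$ as $R\to\infty$ since the ABM has a.s.\ finite lifetime and continuous paths. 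Letting first $n\to\infty$ and then $R\to\infty$ yields \eqref{eq:EE_conv}. (If the statement in Appendix~\ref{sec:bry} is phrased as weak convergence on all of $\CC$ tested against $C_b(\CC)$, the truncation step can be omitted entirely, and one applies that convergence directly to $\psi$.)
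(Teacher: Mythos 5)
Your argument is correct and follows essentially the same route as the paper: write $\EE_\zeta[\Im Z_{\sigma_{F_n}}]$ as the integral of the fixed bounded continuous function $\psi(z)=\max\{0,\min\{\Im z,\Im\tilde F\}\}$ against $\Hm_{\HH\setminus F_n}(\zeta,\cdot)$, then invoke Proposition~\ref{prop:ur_sc} together with Theorem~\ref{thm:BRY19} to pass to the limit. Your extra truncation near infinity is harmless but unnecessary, since Theorem~\ref{thm:BRY19} already gives weak convergence of probability measures on $\CC$ tested against bounded continuous functions, as you yourself note in your closing parenthetical.
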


\begin{proof}
The domains $\HH \setminus F_n$, $n \in \NN \cup \{\infty\}$, are uniformly regular by Proposition~\ref{prop:ur_sc},
and $\HH \setminus F_n$ converges to $\HH \setminus F_\infty$ in Carath\'eodory's sense by assumption.
Thus, it follows from Theorem~\ref{thm:BRY19} in Appendix~\ref{sec:bry} that
$\Hm_{\HH \setminus F_n}(\zeta, \cdot)$
converges weakly to
$\Hm_{\HH \setminus F_\infty}(\zeta, \cdot)$.
Putting $\psi(z) := \max \{0, \min\{\Im z, \Im \tilde{F}\}\}$,
we have
\begin{align*}
\lim_{n \to \infty} \EE_\zeta \left[ \Im Z_{\sigma_{F_n}} \right]
&= \lim_{n \to \infty} \int_\CC \psi(z) \Hm_{\HH \setminus F_n}(\zeta, dz) \\
&= \int_\CC \psi(z) \Hm_{\HH \setminus F_\infty}(\zeta, dz)
= \EE_\zeta \left[ \Im Z_{\sigma_{F_\infty}} \right].
\qedhere
\end{align*}
\end{proof}

Proposition~\ref{prop:EE_conv} completes the proof of Theorem~\ref{thm:main1}.
Indeed, Proposition~\ref{prop:mono} ensures that $\hcap(F_n) < \infty$ for all $n$.
Then by virtue of \eqref{eq:EE_conv} and \eqref{eq:ost} with $F=F_n$,
the dominated convergence theorem applies to \eqref{eq:LLN} with $F = F_n$.
This yields \eqref{eq:main1}.

\subsection{Strict monotonicity}
\label{sec:mono}

In Proposition~\ref{prop:mono}, we have seen that the half-plane capacity is a weakly monotone function of $\HH$-hulls.
In fact, it is strictly monotone as follows:

\begin{proposition}[Strict monotonicity]
\label{prop:str_mono}
Let $F$ and $\tilde{F}$ be $\HH$-hulls with $F \subsetneq \tilde{F}$, $\Im \tilde{F} < \infty$, and $\hcap(\tilde{F}) < \infty$.
Then
\[
\hcap(F) < \hcap(\tilde{F}).
\]
\end{proposition}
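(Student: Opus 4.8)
The plan is to push the argument behind Proposition~\ref{prop:mono} one step further and to locate probabilistically where the strict gain comes from. Fix $\eta>\Im\tilde F$. Since $F\subset\tilde F$ and $\Im\tilde F<\infty$, Proposition~\ref{prop:mono} gives that $\hcap(F)$ exists with $\hcap(F)\le\hcap(\tilde F)<\infty$, while $\Im F\le\Im\tilde F<\eta$; hence Proposition~\ref{prop:LLN} applies to both hulls and yields
\[
\pi\bigl(\hcap(\tilde F)-\hcap(F)\bigr)=\int_\RR v(\xi+i\eta)\,d\xi,
\qquad
v(w):=\EE_w\bigl[\Im Z_{\sigma_{\tilde F}}\bigr]-\EE_w\bigl[\Im Z_{\sigma_F}\bigr].
\]
The optional sampling argument in the proof of Proposition~\ref{prop:mono}, applied now between the stopping times $\sigma_{\tilde F}\le\sigma_F$, shows $v\ge0$; so it suffices to prove $v(\xi+i\eta)>0$ for at least one $\xi$, and I will in fact get it for every $\xi$. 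Because $\sigma_{\tilde F}\le\sigma_F$, and $\sigma_F=\infty$ on $\{\sigma_{\tilde F}=\infty\}$, the strong Markov property at $\sigma_{\tilde F}$ rewrites
\[
v(w)=\EE_w\bigl[h(Z_{\sigma_{\tilde F}})\,;\,\sigma_{\tilde F}<\infty\bigr],
\qquad
h(z):=\Im z-\EE_z\bigl[\Im Z_{\sigma_F}\bigr].
\]
Here $h$ is harmonic and non-negative on $\HH\setminus F$, vanishes on $F$, and is not identically zero since $h(iy)\ge y-\Im F\to\infty$ as $y\to\infty$; by the minimum principle $h>0$ throughout $\HH\setminus F$. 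Consequently $v(w)>0$ as soon as $Z$ started from $w$ reaches $\tilde F$ at a point of $\tilde F\setminus F$ with positive probability.

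Next I would exhibit such hitting points, located at the ``interface'' between $F$ and $\tilde F$. Since $F\subsetneq\tilde F$, the set $S:=(\partial\tilde F\cap\HH)\setminus F$ is non-empty: were it empty, $\HH\setminus F=(\HH\setminus\tilde F)\sqcup(\operatorname{int}\tilde F\setminus F)$ would present the connected domain $\HH\setminus F$ as a disjoint union of two non-empty open sets, which is absurd. Fix $z^*\in S$ and $\rho>0$ with $\overline{B(z^*,\rho)}\subset\HH\setminus F$. I claim that $\tilde F\cap B(z^*,r)$ is non-polar for every $r\in(0,\rho]$. Indeed, a polar set meets only a Lebesgue-null set of circles centred at $z^*$ (its image under the $1$-Lipschitz map $z\mapsto|z-z^*|$ has one-dimensional measure zero); so if $\tilde F\cap B(z^*,r)$ were polar, some circle $\{|z-z^*|=r'\}$ with $0<r'<r$ would be disjoint from $\tilde F$, hence a loop in the simply connected domain $\HH\setminus\tilde F$ winding once about the point $z^*\in\tilde F$ --- impossible.

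Finally I would deduce $v(w)>0$ for every $w\in\HH\setminus F$, which in particular covers $w=\xi+i\eta$. Since $\HH\setminus F$ is connected and $\overline{B(z^*,\rho/2)}\subset\HH\setminus F$, with positive probability $Z$ from $w$ reaches $\overline{B(z^*,\rho/2)}$ before leaving $\HH\setminus F$ (hence before $\sigma_F$); and from any point of $\overline{B(z^*,\rho/2)}$ it hits the compact non-polar set $\tilde F\cap\overline{B(z^*,\rho/2)}$ before leaving $B(z^*,\rho)$ with positive probability. On the intersection of these two events $Z$ has met $\tilde F$ without ever touching $F$, so $Z_{\sigma_{\tilde F}}\in\tilde F\setminus F$ and $h(Z_{\sigma_{\tilde F}})>0$; hence $v(w)>0$. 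Therefore $\int_\RR v(\xi+i\eta)\,d\xi>0$, i.e.\ $\hcap(F)<\hcap(\tilde F)$.

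The genuinely non-routine point is the non-polarity of $\tilde F$ near the interface point $z^*$ --- equivalently, that the boundary of a hull is non-thin at each of its points. This is the sole place where the simple connectivity of $\HH\setminus\tilde F$, rather than the bare strict inclusion $F\subsetneq\tilde F$, is used, and it requires a little topological care (the winding-number argument above, or the classical fact that a simply connected domain distinct from $\CC$ has no irregular boundary points); the remainder is a routine blend of optional sampling, the strong Markov property, and Propositions~\ref{prop:LLN} and~\ref{prop:mono}.
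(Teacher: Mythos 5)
Your proof is correct, and it reaches the conclusion by a genuinely different route from the paper's. The paper argues by contradiction: assuming $\hcap(F)=\hcap(\tilde{F})$, it combines Proposition~\ref{prop:LLN} with the identity theorem for harmonic functions to upgrade equality for a.e.\ $\xi$ to $\EE_\zeta[\Im Z_{\sigma_F}]=\EE_\zeta[\Im Z_{\sigma_{\tilde{F}}}]$ on all of $\HH\setminus\tilde{F}$, evaluates at an interface point $z_0\in(\HH\cap\partial\tilde{F})\setminus F$ (non-empty by the same connectedness argument you give), where regularity via Proposition~\ref{prop:ur_sc} forces the value $\Im z_0$, and then shows $u:=-h\le 0$ by the Phragm\'en--Lindel\"of principle and $u\equiv 0$ by the maximum principle, which is absurd. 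You instead give a direct positive lower bound: $\pi(\hcap(\tilde{F})-\hcap(F))=\int_\RR v$, with $v(w)=\EE_w[h(Z_{\sigma_{\tilde{F}}});\sigma_{\tilde{F}}<\infty]$ by the strong Markov property, $h>0$ on $\HH\setminus F$ by the strong minimum principle (non-negativity coming from optional sampling rather than Phragm\'en--Lindel\"of), and $\PP_w(Z_{\sigma_{\tilde{F}}}\in\tilde{F}\setminus F)>0$ via non-polarity of $\tilde{F}$ near the interface point $z^*$. The two proofs share the harmonic function $h$ and the topological observation that the interface is non-empty; where the paper invokes regularity of $\partial(\HH\setminus\tilde{F})$ at $z_0$, you prove non-polarity of $\tilde{F}\cap B(z^*,r)$ by a circular-projection and winding-number argument, which is a correct self-contained substitute. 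Your route buys a quantitative conclusion (the capacity gap is an explicit positive integral, and $v>0$ pointwise) and avoids the identity theorem, at the cost of two potential-theoretic facts you assert without proof: that Brownian motion in a connected Greenian domain reaches a given compact non-polar subset before exiting with positive probability from every starting point (positivity of the r\'eduite, or Kellogg's theorem producing a regular point of the target set followed by a Harnack-chain argument). These are standard and consistent with the paper's level of citation (e.g.\ Port and Stone), but should be referenced in a written version.
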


\begin{proof}
We assume
\begin{equation} \label{eq:not_mono}
\hcap(F) = \hcap(\tilde{F})
\end{equation}
to the contrary and deduce a contradiction.

By \eqref{eq:LLN}, \eqref{eq:ost}, and \eqref{eq:not_mono}, we have, for each fixed $\eta > \Im \tilde{F}$,
\[
\EE_{\xi+i\eta}\left[\Im Z_{\sigma_F}\right]
= \EE_{\xi+i\eta}\left[\Im Z_{\sigma_{\tilde{F}}}\right]
\quad \text{for a.e.}\ \xi \in \RR.
\]
Both sides of this equality are harmonic functions of variable $\zeta = \xi+i\eta$.
Hence the identity theorem applies:
\[
\EE_\zeta\left[\Im Z_{\sigma_F}\right]
= \EE_\zeta\left[\Im Z_{\sigma_{\tilde{F}}}\right]
\quad \text{for any}\ \zeta \in \HH \setminus \tilde{F}.
\]
In particular, if $z_0 \in (\HH \cap \partial \tilde{F}) \setminus F$, then
\begin{equation} \label{eq:max_u}
\EE_{z_0}\left[\Im Z_{\sigma_F}\right]
= \EE_{z_0}\left[\Im Z_{\sigma_{\tilde{F}}}\right] = \Im z_0
\end{equation}
because $z_0$ is a regular point of $\partial (\HH \setminus \tilde{F})$ by Proposition~\ref{prop:ur_sc}.
Here, the set $(\HH \cap \partial \tilde{F}) \setminus F$ is not empty.
Otherwise, $\tilde{F} \setminus F = \tilde{F}^\circ \setminus F$ would be open ($\tilde{F}^\circ$ stands for the interior of $\tilde{F}$),
and the domain $\HH \setminus F$ would be divided into disjoint open sets $\HH \setminus \tilde{F}$ and $\tilde{F} \setminus F$.

Now, we define a harmonic function
\[
u(z) := \EE_z\left[\Im Z_{\sigma_F}\right] - \Im z,
\quad z \in \HH \setminus F.
\]
Since $u(z)$ enjoys
\[
\lim_{z \to \zeta}u(z) = 0\ \text{for any}\ \zeta \in \partial(\HH \setminus F)
\quad \text{and} \quad
\limsup_{z \to \infty} \frac{u(z)}{\log\lvert z \rvert} \le 0,
\]
a corollary of the Phragm\'en--Lindel\"of principle~\cite[Corollary~2.3.3]{Ra95} yields $u \le 0$ in $\HH \setminus F$.
Hence \eqref{eq:max_u} implies that $u$ takes its maximum at $z_0$.
By the maximum principle, we have $u \equiv 0$, namely,
\[
\EE_z\left[\Im Z_{\sigma_F}\right] = \Im z
\quad \text{for all}\ z \in \HH \setminus F.
\]
This is absurd.
\end{proof}

Using the strict monotonicity, we can show that $\hcap(F_n) \to \hcap(F_\infty)$ implies $F_n \to F_\infty$,
the converse of Theorem~\ref{thm:main1},
if $(F_n)_{n \in \NN}$ is monotone.
To this end, we recall the limit of monotone hulls.

\begin{proposition}[Limit of monotone hulls]
\label{prop:lim_mono_hull}
Let $(F_n)_{n \in \NN}$ be a sequence of $\HH$-hulls.
\begin{enumerate}
\item \label{item:declim}
Suppose that $(F_n)_{n \in \NN}$ is decreasing and set $F := \bigcap_{n \in \NN}F_n$.
Then $F$ is an $\HH$-hull, and $F_n \to F$ in Carath\'eodory's sense (with respect to any point $z_0 \in \HH \setminus F_0$).

\item \label{item:inclim}
Suppose that $(F_n)_{n \in \NN}$ is increasing with $G := \bigcup_{n \in \NN}F_n \subset \tilde{F}$ for some hull $\tilde{F}$.
Let $F$ be the union of $\overline{G} \cap \HH$ and all the connected components of $\HH \setminus \overline{G}$ that are disjoint from $\HH \setminus \tilde{F}$.
Then $F$ is an $\HH$-hull, and $F_n \to F$ in Carath\'eodory's sense (with respect to any point $z_0 \in \HH \setminus \tilde{F}$).
\end{enumerate}
\end{proposition}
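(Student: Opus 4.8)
The plan is to verify the two Carath\'eodory conditions of Definition~\ref{def:kernel} directly for the complementary domains, after first checking that the claimed limit sets are genuine $\HH$-hulls. For part~\ref{item:declim}, the set $F = \bigcap_n F_n$ is relatively closed in $\HH$ as an intersection of relatively closed sets, so I only need $\HH \setminus F$ to be simply connected; since $\HH \setminus F = \bigcup_n (\HH \setminus F_n)$ is an increasing union of simply connected domains all containing $z_0$, it is itself a simply connected domain (an increasing union of simply connected open sets is simply connected — any loop lies in some $\HH \setminus F_n$ and is contractible there). For the kernel convergence $\HH \setminus F_n \to \HH \setminus F$: a compact $K \subset \HH \setminus F$ is covered by the increasing open sets $\HH \setminus F_n$, hence $K \subset \HH \setminus F_n$ for large $n$; and if a domain $U \ni z_0$ lies in infinitely many $\HH \setminus F_n$, then by monotonicity it lies in all $\HH \setminus F_n$, hence in their union $\HH \setminus F$. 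So both conditions hold trivially.

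For part~\ref{item:inclim} the situation is genuinely more delicate because $\overline{G} \cap \HH$ need not have simply connected complement in $\HH$ — the decreasing domains $\HH \setminus F_n$ may ``pinch off'' bounded bubbles — and this is exactly why $F$ is defined by adding back to $\overline{G} \cap \HH$ those components of $\HH \setminus \overline{G}$ that do not meet $\HH \setminus \tilde{F}$. First I would check $F$ is relatively closed in $\HH$: its complement in $\HH$ is precisely the union of those components of $\HH \setminus \overline{G}$ that \emph{do} meet $\HH \setminus \tilde{F}$, which is open. Then I must show $\HH \setminus F$ is simply connected. The key point is that $\HH \setminus F$ is a connected component of $\HH \setminus \overline{G}$ — namely the one containing $z_0$ (which lies in $\HH \setminus \tilde{F} \subset \HH \setminus \overline{G}$). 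Indeed, by construction $\HH \setminus F$ is a union of components of $\HH \setminus \overline{G}$ each meeting $\HH \setminus \tilde{F}$; but $\HH \setminus \tilde{F}$ is connected and contained in $\HH \setminus \overline{G}$, so it lies in a single component, forcing $\HH \setminus F$ to be exactly that one component. A single component of $\HH \setminus \overline{G}$ is an open connected subset of $\HH$; to see it is simply connected one uses that $\overline{G}$ is the closure of an increasing union of hulls, so $\widehat{\mathbb{C}} \setminus \overline{G}$ (or rather $\HH \setminus \overline{G}$ together with the reflected picture across $\RR$) has the right topological structure — this I would phrase via the reflection trick, reflecting across $\RR$ so that $\HH \setminus F_n$ becomes a simply connected subdomain of $\widehat{\CC}$ symmetric about $\RR$, passing to the decreasing intersection, and invoking that a component of a decreasing limit of simply connected domains on the sphere is simply connected.

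Finally, the kernel convergence $\HH \setminus F_n \to \HH \setminus F$ with reference point $z_0 \in \HH \setminus \tilde{F}$. For the first condition, let $K \subset \HH \setminus F$ be compact; since $\HH \setminus F$ is a component of $\HH \setminus \overline{G}$ and $\overline{G} = \overline{\bigcup F_n}$, the distance from $K$ to $\overline{G}$ is positive, so $K$ is disjoint from $F_n$ for every $n$ (as $F_n \subset \overline{G}$); thus $K \subset \HH \setminus F_n$ for all $n$ — here I should double-check that $K$ avoiding $\overline{G}$ together with $K$ lying in the $z_0$-component really gives $K \subset \HH\setminus F_n$, which it does since $F_n \subset \overline{G}$ outright. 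For the second condition, suppose a domain $U \ni z_0$ satisfies $U \subset \HH \setminus F_n$ for infinitely many $n$; by monotonicity of $(F_n)$, actually $U \subset \HH \setminus F_n$ for \emph{all} $n$, hence $U \cap \bigcup_n F_n = \emptyset$, so $U \subset \HH \setminus G$; since $U$ is connected and $U$ meets $\HH \setminus \tilde F$ through $z_0$... wait — more carefully, $U$ is open and disjoint from $G$ hence $U \subset \HH \setminus \overline{G}$ only after noting $U$ open disjoint from $G$ need not be disjoint from $\overline G$; instead I argue $U \setminus \overline{G}$ is open, contains $z_0$, and one shows $U \subset \overline{\HH \setminus \overline G}$-style—the cleanest route is: $U$ connected, $z_0 \in U \cap (\HH\setminus\tilde F)$, and $U \subset \HH\setminus G$; then $U$ cannot meet any component of $\HH\setminus\overline G$ other than the $z_0$-component without crossing $\overline G \setminus G \subset \partial G$, but points of $\partial G$ are limits of points of $G$, and $U$ open disjoint from $G$ cannot contain such limit points — contradiction. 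Hence $U$ lies in the $z_0$-component of $\HH \setminus \overline G$, which is $\HH \setminus F$.

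The main obstacle is the topological bookkeeping in part~\ref{item:inclim}: establishing that $\HH \setminus F$ is precisely the $z_0$-component of $\HH \setminus \overline G$ and that this component is simply connected, and then handling the second kernel condition where one must argue that an open set disjoint from $G$ and containing $z_0$ cannot secretly touch another component of $\HH\setminus\overline G$. I expect the reflection-across-$\RR$ device to be the efficient way to import simple connectivity of the intersection of simply connected domains on the sphere, reducing the problem to a clean statement about nested simply connected domains in $\widehat\CC$.
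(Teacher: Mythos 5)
The paper itself omits the proof of Proposition~\ref{prop:lim_mono_hull} (it only remarks that the statement ``is not so difficult to prove on the basis of Definitions~\ref{def:kernel} and \ref{def:hull_continuity}'' and skips the details), so there is no official argument to compare against; your proposal has to stand on its own, and in essence it does. Part~\ref{item:declim} is complete as written. In part~\ref{item:inclim}, your identification of $\HH \setminus F$ with the $z_0$-component of $\HH \setminus \overline{G}$ is correct, and both kernel conditions go through --- though you make the second one harder than it is: an open set $U$ disjoint from $G$ is \emph{automatically} disjoint from $\overline{G}$ (its complement is closed and contains $G$), so $U \subset \HH \setminus \overline{G}$ at once and connectedness places $U$ in the $z_0$-component; your detour through $\partial G$ is just a re-derivation of this. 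The one step still owed is the simple connectivity of that component, and here the reflection across $\RR$ is both unnecessary and slightly hazardous: a hull such as $\{\, z \mathrel{;} 0 < \Im z \le 1 \,\}$ leaves no free boundary on $\RR$ through which to reflect. Two cleaner routes: (i) in $\widehat{\CC}$ the complements $F_n \cup (\widehat{\CC} \setminus \HH)$ are increasing, closed and connected, so $\widehat{\CC} \setminus (\HH \setminus G)$ is connected, and one then invokes (and proves) the lemma that every component of the interior of a set with connected complement in $\widehat{\CC}$ is simply connected; or (ii) argue directly with winding numbers: a loop $\gamma$ in $\HH \setminus F$ has winding number zero about every point of $\CC \setminus \HH$ and of every $F_n$ (since $\gamma \subset \HH \setminus F_n$, which is simply connected), hence about every point of $\overline{G} \cap \HH$ by local constancy of the winding number, hence about every point of any component $V' \subset F$ of $\HH \setminus \overline{G}$, because the winding number is constant on $V'$ and $\partial V' \cap \HH$ is a nonempty subset of $\overline{G} \cap \HH$. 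Either way the remaining gap is filled by a standard argument, so I would call this a correct proof with one topological lemma left to be written out rather than a flawed one.
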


Proposition~\ref{prop:lim_mono_hull} is well known if $\HH$-hulls are uniformly bounded.
It is also not so difficult to prove on the basis of Definitions~\ref{def:kernel} and \ref{def:hull_continuity}.
We omit the detail here.

\begin{proposition}
\label{prop:converse_main}
Let $F_n$, $n \in \NN \cup \{+\infty\}$, be a monotone sequence of $\HH$-hulls.
Suppose that there exists a hull $\tilde{F}$ such that
\[
\bigcup_n F_n \subset \tilde{F},\ \Im \tilde{F}<\infty,\ \text{and}\ \hcap(\tilde{F}) < \infty.
\]
Then $\lim_{n \to \infty}\hcap(F_n) = \hcap(F_\infty)$ implies $F_n \to F_\infty$ in Carath\'eodory's sense.
\end{proposition}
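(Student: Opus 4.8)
The plan is to identify $F_\infty$ with the Carath\'eodory limit $F$ of the monotone sequence $(F_n)_{n \in \NN}$ furnished by Proposition~\ref{prop:lim_mono_hull}, by showing that the two hulls are nested and carry the same half-plane capacity, and then invoking strict monotonicity (Proposition~\ref{prop:str_mono}) to force equality.

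First I would let $F$ be the hull given by Proposition~\ref{prop:lim_mono_hull}: in the decreasing case $F = \bigcap_n F_n$, and in the increasing case $F = (\overline{G} \cap \HH) \cup \bigcup_{U} U$, where $G = \bigcup_n F_n$ and $U$ ranges over the connected components of $\HH \setminus \overline{G}$ disjoint from $\HH \setminus \tilde{F}$. In either case $F \subset \tilde{F}$: this is immediate when the sequence is decreasing, and when it is increasing, $\overline{G} \cap \HH \subset \tilde{F}$ because $\tilde{F}$ is relatively closed in $\HH$ and contains $G$, while each admissible component $U$ lies in $\HH$ and misses $\HH \setminus \tilde{F}$, hence $U \subset \tilde{F}$. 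Consequently $\hcap(F)$ exists and is finite by Proposition~\ref{prop:mono}. Since $F_n \to F$ in Carath\'eodory's sense by Proposition~\ref{prop:lim_mono_hull}, Theorem~\ref{thm:main1} gives $\hcap(F_n) \to \hcap(F)$, and comparing with the hypothesis $\hcap(F_n) \to \hcap(F_\infty)$ we obtain $\hcap(F) = \hcap(F_\infty)$.

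Next I would verify that $F$ and $F_\infty$ are comparable under inclusion. If $(F_n)$ is decreasing, then $F_\infty \subset F_n$ for every $n$, so $F_\infty \subset \bigcap_n F_n = F$. If $(F_n)$ is increasing, then $F_n \subset F_\infty$ for every $n$, so $G \subset F_\infty$ and hence $\overline{G} \cap \HH \subset \overline{F_\infty} \cap \HH = F_\infty$. The set $\HH \setminus F_\infty$ is then connected and contained in $\HH \setminus \overline{G}$, so it lies in a single connected component $V$ of $\HH \setminus \overline{G}$; since $\HH \setminus \tilde{F} \subset \HH \setminus F_\infty \subset V$, every other component $U \neq V$ is disjoint from $\HH \setminus \tilde{F}$, whence $F = \HH \setminus V \subset F_\infty$. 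In both cases one of $F$, $F_\infty$ contains the other; both are $\HH$-hulls contained in $\tilde{F}$, so the imaginary part of the larger one is at most $\Im \tilde{F} < \infty$ and its half-plane capacity is at most $\hcap(\tilde{F}) < \infty$ by Proposition~\ref{prop:mono}. If the inclusion were proper, Proposition~\ref{prop:str_mono} would force $\hcap(F) \neq \hcap(F_\infty)$, contradicting the equality established above. Hence $F = F_\infty$, and therefore $F_n \to F_\infty$ in Carath\'eodory's sense. I expect the only genuine subtlety to be the increasing case, where $F$ is strictly larger than $\overline{G} \cap \HH$ in general, so that the inclusion $F \subset F_\infty$ relies on the bookkeeping of the connected components of $\HH \setminus \overline{G}$ described above; the decreasing case and the final appeal to strict monotonicity are routine.
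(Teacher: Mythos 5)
Your proof is correct and follows essentially the same route as the paper: identify the Carath\'eodory limit $F$ via Proposition~\ref{prop:lim_mono_hull}, deduce $\hcap(F)=\hcap(F_\infty)$ from Theorem~\ref{thm:main1}, establish that $F$ and $F_\infty$ are nested, and conclude $F=F_\infty$ by strict monotonicity. The only (harmless) divergence is in the increasing case, where you derive $F\subset F_\infty$ by direct bookkeeping of the components of $\HH\setminus\overline{G}$, whereas the paper gets it in one line from the second condition in Definition~\ref{def:kernel} applied to $\HH\setminus F_\infty\subset\bigcap_n(\HH\setminus F_n)$.
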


\begin{proof}
We assume first that $F_n$, $n \in \NN \cup \{+\infty\}$, are decreasing.
Let $F := \bigcap_{n \in \NN}F_n \supset F_\infty$.
We have $F_n \to F$ by Proposition~\ref{prop:lim_mono_hull}~\eqref{item:declim}
and hence $\hcap(F_n) \to \hcap(F)$ by Theorem~\ref{thm:main1}.
Thus, we obtain
\[
F_\infty \subset F
\quad \text{and} \quad
\hcap(F_\infty) = \hcap(F).
\]
Proposition~\ref{prop:str_mono} yields $F_\infty = F$.

We assume next that $F_n$, $n \in \NN \cup \{+\infty\}$, are increasing.
Let $F$ be the hull defined in Proposition~\ref{prop:lim_mono_hull}~\eqref{item:inclim}.
We have $F_n \to F$ and hence $\hcap(F_n) \to \hcap(F)$ by Theorem~\ref{thm:main1}.
Since $\HH \setminus F_\infty \subset \bigcap_{n \in \NN}(\HH \setminus F_n)$,
Definition~\ref{def:kernel} implies that $\HH \setminus F_\infty \subset \HH \setminus F$.
Thus,
\[
F_\infty \supset F
\quad \text{and} \quad
\hcap(F_\infty) = \hcap(F).
\]
Proposition~\ref{prop:str_mono} yields $F_\infty = F$ again.
\end{proof}

\section{Study on parallel slit half-planes}
\label{sec:fc}

In this section, we formulate and prove an extension of Theorem~\ref{thm:main1} in parallel slit half-planes,
a standard type of finitely connected domains.

\subsection{BMD half-plane capacity}
\label{sec:BMD_hcap}

Let $N \in \NN \setminus \{0\}$.
For disjoint horizontal line segments $C_j$, $j=1, 2, \ldots, N$, in $\HH$,
we put $K := \bigcup_{j=1}^N C_j$ and $D := \HH \setminus K$.
Such a domain $D$ is called a parallel slit half-plane.
We also consider the quotient space $D^\ast$ of $\HH$
in which each $C_j$ is identified with one point $c^\ast_j$.
With $K^\ast := \{ c^\ast_1, c^\ast_2, \ldots, c^\ast_N \}$,
it is written as $D^\ast := D \cup K^\ast$.

A BMD in $D^\ast$, which we denote by $Z^\ast = ((Z^\ast_t)_{t \geq 0}, (\PP^\ast_z)_{z \in D^\ast})$, is a symmetric%
\footnote{
`Symmetric' means ``symmetric with respect to the Lebesgue measure on $D^\ast$.''
The measure of $K^\ast$ is set to be zero.
}
diffusion process in $D^\ast$ with the following two properties:
\begin{itemize}
\item The killed process of $Z^\ast$ when it exits from $D$ is an ABM in $D$;
\item $Z^\ast$ admits no killing in $K^\ast$.
\end{itemize}
See Chen, Fukushima and Rohde~\cite{CFR16} for basic properties of BMD.

The (BMD) half-plane capacity of an $\HH$-hull $F \subset D$ relative to $D$
can be defined by
\begin{equation}
\label{eq:def2}
\hcap^D(F) := \lim_{y \to \infty} y \EE^\ast_{iy} \left[ \Im Z^\ast_{\sigma_F} \right]
\end{equation}
as long as this limit exists.
Here, $\sigma_F$ denotes the first hitting time of $Z^\ast$ to $F$.
As before, the right-hand side of \eqref{eq:def2} typically coincides with the (angular) residue of some conformal mapping from or onto $D \setminus F$ (see Chen and Fukushima~\cite[p.591]{CF18}),
which ensures the validity of this definition.

The main result of this section is the following:

\begin{theorem}
\label{thm:main2}
Let $\tilde{F} \subset D$ be an $\HH$-hull
with $\Im \tilde{F}<\infty$ and $\hcap^D(\tilde{F})<\infty$.
Suppose that $\HH$-hulls $F_n$, $n \in \NN$, and $F_\infty$ are all contained in $\tilde{F}$
and that $F_n$ converges to $F_\infty$ as $n \to \infty$
(in the sense of Definition~\ref{def:hull_continuity}).
Then
\begin{equation}
\label{eq:main2}
\lim_{n \to \infty} \hcap^D(F_n) = \hcap^D(F_\infty).
\end{equation}
\end{theorem}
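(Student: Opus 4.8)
The plan is to replicate the proof of Theorem~\ref{thm:main1} with the ABM $Z$ replaced by the BMD $Z^\ast$, checking that each ingredient carries over to the parallel slit half-plane. First I would establish the BMD analogue of the Lalley--Lawler--Narayanan formula \eqref{eq:LLN}: for $\eta > \Im\tilde F$ the strip $\HH_\eta$ lies entirely in $D^\ast$ (it avoids all the slits $C_j$, which sit below height $\Im\tilde F$), so the strong Markov property of $Z^\ast$ at the exit time from $\HH_\eta$, together with the fact that inside $\HH_\eta$ the BMD behaves exactly like an ordinary (two-dimensional) Brownian motion, gives the identity
\[
y\,\EE^\ast_{iy}\bigl[\Im Z^\ast_{\sigma_F}\bigr]
= \frac{1}{\pi}\int_\RR \EE^\ast_{\xi+i\eta}\bigl[\Im Z^\ast_{\sigma_F}\bigr]\,\frac{y(y-\eta)}{\xi^2+(y-\eta)^2}\,d\xi
\quad\text{for } y>\eta>\Im\tilde F,
\]
whence Fatou's lemma plus the uniform bound \eqref{eq:Poi_dom} yield
$\hcap^D(F) = \tfrac1\pi\int_\RR \EE^\ast_{\xi+i\eta}[\Im Z^\ast_{\sigma_F}]\,d\xi$ exactly as in Proposition~\ref{prop:LLN}. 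The monotonicity argument of Proposition~\ref{prop:mono} also survives: $(\Im Z^\ast_t)_{t\ge0}$ is a nonnegative supermartingale under $\PP^\ast_z$ with last element $0$ (the BMD admits no killing on $K^\ast$, and $\Im$ is harmonic for the BMD away from the real line), so the optional sampling inequality $\EE^\ast_{\xi+i\eta}[\Im Z^\ast_{\sigma_{F}}] \le \EE^\ast_{\xi+i\eta}[\Im Z^\ast_{\sigma_{\tilde F}}]$ holds for $F\subset\tilde F$ and $\eta>\Im\tilde F$; consequently $\hcap^D(F_n)<\infty$ for every $n$ and the right-hand side above is dominated, uniformly in $n$, by the integrable function $\xi\mapsto\EE^\ast_{\xi+i\eta}[\Im Z^\ast_{\sigma_{\tilde F}}]$.

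Next I would prove the pointwise convergence $\EE^\ast_\zeta[\Im Z^\ast_{\sigma_{F_n}}] \to \EE^\ast_\zeta[\Im Z^\ast_{\sigma_{F_\infty}}]$ for every $\zeta\in\HH\setminus\tilde F$, which is the BMD counterpart of Proposition~\ref{prop:EE_conv}. The point is that $\sigma_{F_n}$ is the hitting time of $F_n$ by the BMD, and the law of $Z^\ast$ stopped upon hitting $F_n$ is the harmonic measure of the BMD-domain $D^\ast\setminus F_n = (D\setminus F_n)\cup K^\ast$ seen from $\zeta$. Writing $\psi(z) := \max\{0,\min\{\Im z,\Im\tilde F\}\}$ as before, one has $\EE^\ast_\zeta[\Im Z^\ast_{\sigma_{F_n}}] = \int \psi\,d\,\Hm^\ast_{D^\ast\setminus F_n}(\zeta,\cdot)$, so it suffices to show the BMD harmonic measures converge weakly. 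This is where the machinery of Sections~\ref{sec:mc} and \ref{sec:ur} (monotone-class / uniform-regularity arguments, and Proposition~\ref{prop:ur}) enters: one needs that the domains $D^\ast\setminus F_n$ are \emph{uniformly regular} for the BMD and that $D\setminus F_n$ converges to $D\setminus F_\infty$ in Carath\'eodory's sense, which follows from $F_n\to F_\infty$ and the fixed ``frame'' $D$; then the Binder--Rojas--Yampolsky-type theorem (Theorem~\ref{thm:BRY19}, suitably adapted to BMD, as the excerpt promises in Section~\ref{sec:ur}) gives weak convergence of $\Hm^\ast_{D^\ast\setminus F_n}(\zeta,\cdot)$ to $\Hm^\ast_{D^\ast\setminus F_\infty}(\zeta,\cdot)$, and since $\psi$ is bounded and continuous the integrals converge.

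Finally I would assemble the pieces exactly as in the simply connected case: applying the dominated convergence theorem to $\hcap^D(F_n) = \tfrac1\pi\int_\RR \EE^\ast_{\xi+i\eta}[\Im Z^\ast_{\sigma_{F_n}}]\,d\xi$, with the pointwise convergence from the previous paragraph (taking $\zeta = \xi+i\eta$, which lies in $\HH\setminus\tilde F$ since $\eta>\Im\tilde F$) and the $n$-uniform dominating function $\EE^\ast_{\cdot+i\eta}[\Im Z^\ast_{\sigma_{\tilde F}}]$, yields $\lim_n \hcap^D(F_n) = \tfrac1\pi\int_\RR \EE^\ast_{\xi+i\eta}[\Im Z^\ast_{\sigma_{F_\infty}}]\,d\xi = \hcap^D(F_\infty)$, which is \eqref{eq:main2}. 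The main obstacle is the second paragraph: the ABM arguments for the probabilistic formula and the supermartingale property are essentially formal once one observes that the slits lie below the strip $\HH_\eta$, but the weak convergence of BMD harmonic measures genuinely requires reworking the uniform-regularity and kernel-convergence estimates in the presence of the darning points $K^\ast$ — one must control the behavior of $Z^\ast$ near the slits (not just near $\partial D$) and verify that the Carath\'eodory convergence of the $\HH$-hulls, which says nothing directly about $K$, still forces the BMD harmonic measures to converge because $K$ is held fixed. This is precisely the content deferred to Sections~\ref{sec:mc}, \ref{sec:ur}, Appendix~\ref{sec:appendix}, and the BMD version of Appendix~\ref{sec:bry}.
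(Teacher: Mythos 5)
Your outer skeleton --- the BMD analogues of Propositions~\ref{prop:LLN} and \ref{prop:mono}, then pointwise convergence of $\EE^\ast_\zeta[\Im Z^\ast_{\sigma_{F_n}}]$, then dominated convergence in the integral formula --- is exactly the paper's reduction of Theorem~\ref{thm:main2} to Proposition~\ref{prop:EE_conv2}, and your first and last paragraphs are sound. The gap is in how you propose to prove the pointwise convergence. You reduce it to weak convergence of the \emph{BMD} harmonic measures of the quotient domains $D^\ast\setminus F_n$ and then invoke ``Theorem~\ref{thm:BRY19}, suitably adapted to BMD.'' No such adaptation is given in the paper, and it is not a routine modification: the proof of Theorem~\ref{thm:BRY19} (recurrence of planar Brownian motion, the interior approximations $U_{\delta,r}$, Lipschitz test functions on $\CC$, uniform regularity near the boundary) is written for ordinary Brownian motion in planar domains, whereas near the slits the BMD is not absorbed at all --- the darning points are interior points of $D^\ast\setminus F_n$ --- so one would have to re-found the notions of Carath\'eodory convergence and uniform regularity on the quotient space before the argument could even be stated. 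You have also mischaracterized Sections~\ref{sec:mc} and \ref{sec:ur}: neither concerns BMD harmonic measures.

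What the paper actually does is avoid any BMD version of Theorem~\ref{thm:BRY19}. Section~\ref{sec:mc} expresses $V^\ast_n(\zeta)=\EE^\ast_\zeta[\Im Z^\ast_{\sigma_{F_n}}]$ entirely in terms of the absorbed Brownian motion: Proposition~\ref{prop:BMD_to_ABM} writes $V^\ast_n$ as $V_n$ plus a finite linear combination whose coefficients $\varphi^{(j)}_n$, $p^n_{jk}$, $M^n_{jk}$ come from the Markov chain that the BMD induces on the darning points, and each ingredient is an integral of a fixed bounded continuous function against the \emph{ordinary} harmonic measure $\Hm_{D\setminus F_n}(\cdot,\cdot)$ of the planar domain $D\setminus F_n$ (Proposition~\ref{prop:conv3}). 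Consequently only the unmodified Theorem~\ref{thm:BRY19} is needed, applied to the planar domains $D\setminus F_n$, whose uniform regularity --- including the behavior near the slits, handled by Beurling's projection theorem --- is Proposition~\ref{prop:ur}. To complete your proof you must either carry out the BMD version of Appendix~\ref{sec:bry} in full, which is a substantial and nontrivial piece of work, or replace that step by this Markov-chain reduction to the ABM.
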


Notice that $D \setminus F_n$ converges to $D \setminus F_\infty$ in Carath\'eodory's sense if $F_n \to F_\infty$ in the sense of Definition~\ref{def:hull_continuity}.
This is easily seen in view of Definition~\ref{def:kernel}.

Propositions~\ref{prop:LLN} and \ref{prop:mono} hold for BMD and BMD half-plane capacity with obvious modifications.
Thus, in order to prove Theorem~\ref{thm:main2}, it suffices to show the following:

\begin{proposition}
\label{prop:EE_conv2}
Under the assumption of Theorem~\ref{thm:main2},
it holds that
\begin{equation}
\label{eq:EE_conv2}
\lim_{n \to \infty} \EE^\ast_\zeta \left[ \Im Z^\ast_{\sigma_{F_n}} \right]
= \EE^\ast_\zeta \left[ \Im Z^\ast_{\sigma_{F_\infty}} \right]
\quad \text{for every}\ \zeta \in D \setminus \tilde{F}.
\end{equation}
\end{proposition}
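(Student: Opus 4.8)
The plan is to reduce the BMD expectation to quantities for an absorbed Brownian motion on the slit domains $\HH\setminus(F_n\cup K)=D\setminus F_n$, and then to invoke the weak convergence of harmonic measures under Carath\'eodory convergence (Theorem~\ref{thm:BRY19}). Write $h_n(z):=\EE^\ast_z[\Im Z^\ast_{\sigma_{F_n}}]$ for $n\in\NN\cup\{\infty\}$, put $\psi(z):=\max\{0,\min\{\Im z,\Im\tilde F\}\}$ (bounded, continuous on $\CC$, equal to $\Im z$ on $\tilde F$, equal to $0$ on $\RR$, and constant, say $\psi_j$, on each $C_j$), and set $r_0:=\dist(\tilde F,K)$, which is positive because $K$ is compact and disjoint from the relatively closed set $\tilde F$; in particular $\dist(F_n,K)\ge r_0$ for every $n$.

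First I would record an excursion identity. Since the part of $Z^\ast$ killed upon leaving $D$ is an ABM in $D$, running $Z^\ast$ from $z\in D\setminus F_n$ up to the first hitting time $\sigma_K$ of $K$ is the same as running an ABM up to its exit from $\HH\setminus(F_n\cup K)$; hence, with $\omega_n(z,\cdot):=\Hm_{\HH\setminus(F_n\cup K)}(z,\cdot)$, the strong Markov property at $\sigma_K$ and the absence of killing on $K^\ast$ give
\begin{equation}
h_n(z)=\EE^\ast_z\!\big[\Im Z^\ast_{\sigma_{F_n}};\,\sigma_{F_n}<\sigma_K\big]+\sum_{j=1}^{N}\omega_n(z,C_j)\,h_n(c^\ast_j),
\tag{$\star$}
\end{equation}
and the first term equals $\int_\CC\psi\,d\omega_n(z,\cdot)-\sum_{j=1}^{N}\psi_j\,\omega_n(z,C_j)$ (the $\RR$-part of $\omega_n$ carries $\psi\equiv0$ and the $C_j$-part carries the constant $\psi_j$). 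To pin down $\mathbf h_n=(h_n(c^\ast_1),\dots,h_n(c^\ast_N))$ I would fix, for each $j$, a disk $B_j$ containing $C_j$ in its interior with $\overline{B_j}$ disjoint from $\RR$, from the other slits, and from $\tilde F$; stopping $Z^\ast$ started at $c^\ast_j$ when it leaves $B_j^\ast$ (the disk with $C_j$ collapsed) produces a probability measure $\rho_j$ on $\partial B_j$ that does \emph{not} depend on $n$ (the stopped process sees only $B_j$ and $C_j$), and the strong Markov property yields $h_n(c^\ast_j)=\int_{\partial B_j}h_n\,d\rho_j$. Feeding $(\star)$ for $h_n(w)$, $w\in\partial B_j$, into this gives a linear system $\mathbf h_n=\mathbf b_n+M_n\mathbf h_n$ with $(M_n)_{jk}=\int_{\partial B_j}\omega_n(w,C_k)\,\rho_j(dw)$ and $(\mathbf b_n)_j$ the corresponding $\rho_j$-average of the first term of $(\star)$; every row sum of $M_n$ is at most $\theta:=\max_j\sup_{w\in\partial B_j}\PP_w(\sigma_K<\tau_\HH)$, which is $<1$ (the function $w\mapsto\PP_w(\sigma_K<\tau_\HH)$ is harmonic and strictly below $1$ on $\HH\setminus K$, hence $<1$ on the compact $\partial B_j$) and independent of $n$, so $I-M_n$ is invertible with $\|(I-M_n)^{-1}\|_\infty\le(1-\theta)^{-1}$ and $\mathbf h_n=(I-M_n)^{-1}\mathbf b_n$.

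Next I would pass to the limit. The domains $\HH\setminus(F_n\cup K)=D\setminus F_n$ converge to $D\setminus F_\infty$ in Carath\'eodory's sense (the remark after Theorem~\ref{thm:main2}) and are uniformly regular by Proposition~\ref{prop:ur}, so Theorem~\ref{thm:BRY19} gives $\omega_n(w,\cdot)\Rightarrow\omega_\infty(w,\cdot)$ for every fixed $w\in\HH\setminus(F_\infty\cup K)$. Since $\psi\in C_b(\CC)$, this makes $\int_\CC\psi\,d\omega_n(w,\cdot)$ converge; enclosing each $C_k$ in a neighbourhood whose closure meets $\partial(D\setminus F_\infty)$ only in $C_k$, the Portmanteau theorem makes $\omega_n(w,C_k)$ converge as well. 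With all integrands uniformly bounded, dominated convergence over $\rho_j$ yields $\mathbf b_n\to\mathbf b_\infty$ and $M_n\to M_\infty$, whence $\mathbf h_n=(I-M_n)^{-1}\mathbf b_n\to(I-M_\infty)^{-1}\mathbf b_\infty=\mathbf h_\infty$. Applying $(\star)$ at $z=\zeta$ (legitimate since $\zeta\in D\setminus\tilde F\subset\HH\setminus(F_\infty\cup K)$) and using these convergences once more gives $h_n(\zeta)\to h_\infty(\zeta)$, which is \eqref{eq:EE_conv2}.

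The step I expect to be the main obstacle is Proposition~\ref{prop:ur}: one must show that a point lying close to one of the segments $C_j$ is a ``good'' boundary point of $D\setminus F_n$, uniformly in $n$, and this rests on a quantitative lower bound for the probability that planar Brownian motion started near a line segment hits that segment quickly --- exactly the lemma of Appendix~\ref{sec:appendix}. A secondary technical point is the rigorous treatment of the collapsed points $c^\ast_j$, i.e.\ the existence and $n$-independence of the exit laws $\rho_j$ and the identity $h_n(c^\ast_j)=\int_{\partial B_j}h_n\,d\rho_j$, which follow from the construction of BMD in \cite{CFR16}; the remaining ingredients (the weak-convergence input, the $N\times N$ linear algebra, and dominated convergence) are then routine.
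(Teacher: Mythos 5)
Your proposal follows essentially the same route as the paper: reduce the BMD expectation to ABM harmonic measures on $D\setminus F_n$ via a finite linear system indexed by the slits (your exit laws $\rho_j$ on circles $\partial B_j$ play the role of the paper's measures $\nu_j$ on Jordan curves $\eta_j$, and your row-sum contraction bound for $I-M_n$ replaces the paper's conditioned-chain argument for invertibility of $I-Q$), and then invoke uniform regularity of the slit domains together with Theorem~\ref{thm:BRY19} to pass to the limit. The argument is correct; the only small inaccuracy is in your closing remark, since the appendix lemma handles the case of points near $F_n$ (controlling returns after an excursion to $K$), while the case of points near a slit $C_j$ is settled by Beurling's projection theorem.
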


We take two steps to prove Proposition~\ref{prop:EE_conv2}:
\begin{enumerate}
\item \label{item:idea1}
Express $\EE^\ast_\zeta \left[ \Im Z^\ast_{\sigma_{F_n}} \right]$ in terms of ABM;

\item \label{item:idea2}
Prove the uniform regularity (Definition~\ref{def:ur}) of $D \setminus F_n$'s to show that $\Hm_{D \setminus F_n}(\zeta, \cdot) \to \Hm_{D \setminus F_\infty}(\zeta, \cdot)$ weakly.
\end{enumerate}
To carry out the step~\ref{item:idea1}),
we study Markov chains on $K^\ast$ associated with BMD  in Section~\ref{sec:mc}.
To implement the step~\ref{item:idea2}), we make a comparison of $\Hm_{D \setminus F_n}$ and $\Hm_{\HH \setminus F_n}$, the latter of which behaves well by Proposition~\ref{prop:ur_sc}, in Section~\ref{sec:ur}.

\subsection{Markov chains induced by BMD}
\label{sec:mc}

Let $F$ and $\tilde{F}$ be $\HH$-hulls with $F \subset \tilde{F} \subset D$ and $\Im \tilde{F}<\infty$.
Following Lawler~\cite[Sections~5.2 and 5.3]{La06} and Chen, Fukushima and Rohde~\cite[Appendix~1]{CFR16},
we observe that BMD naturally induces Markov chains in $K^\ast \cup \{c^\ast_0\}$ ($c^\ast_0$ represents the cemetery of the chains).
We shall use their transition probabilities to compute the expectation with respect to BMD.

For each $j=1, 2, \ldots, N$, let $\eta_j$ be a smooth Jordan curve
in $D \setminus \tilde{F}$ surrounding $C_j$.
We define a measure on $\eta_j$ by
\[
\nu_j(dz) := \PP^\ast_{c^\ast_j} \left(Z^\ast_{\sigma_{\eta_j}} \in dz\right).
\]
We also put
\[
\varphi^{(j)}(z)
:= \Hm_{D \setminus F}(z, C_j)
= \PP^\ast_z(Z^\ast_{\sigma_{K^\ast}} = c^\ast_j, \sigma_{K^\ast} < \sigma_F)
\]
for $z \in D \setminus F$.
We consider a Markov chain $X = (X_n)_{n \in \NN}$
whose transition probability from $c^\ast_j$ to $c^\ast_k$,
$j, k \in \{1, 2, \ldots, N\}$, is given by
\begin{equation}
\label{eq:tp1}
p_{jk}
:= \EE^\ast_{c^\ast_j} \left[
	\PP^\ast_{Z^\ast_{\sigma_{\eta_j}}} \left(
		Z^\ast_{\sigma_{K^\ast}} = c^\ast_k, \sigma_{K^\ast} < \sigma_F
	\right)
	\right]
= \int_{\eta_j} \varphi^{(k)}(z) \, \nu_j(dz).
\end{equation}
Hence the chain $X$ moves from $c^\ast_j$ to $c^\ast_k$
when a BMD restricted in $D^\ast \setminus F$ moves from $c^\ast_j$ to $c^\ast_k$ after passing $\eta_j$.
The probabilities $p_{j0}$, $0 \leq j \leq N$, are defined in an obvious way.

We condition the chain $X$ defined above not to stay at the same state in one step.
Then the corresponding transition probability is given by
\[
q_{jk} = \frac{p_{jk}}{1 - p_{jj}}\ (j \neq 0),
\quad q_{0k} = \delta_{0k}.
\]
This conditioned chain satisfies
$q_{j0} > 0$ for all $j$.
Thus, the matrix
$Q := (q_{jk})_{j,k=1}^N$
has eigenvalues which are all less than one,
and the inverse $M = (M_{jk})_{j,k=1}^N := (I - Q)^{-1}$ exists.

Using the transition probabilities introduced above,
let us compute
\[
V^\ast(z) := \EE^\ast_{z}\left[ \Im Z^\ast_{\sigma_F} \right],
\quad z \in D^\ast \setminus F.
\]
We also define%
\footnote{
The symbol $\sigma_{\cdot}$ is used for two meanings here: the hitting times of the ABM $Z$ in $\HH$ and of the BMD $Z^\ast$ in $D^\ast$.
Although this is abuse of notation, there will be no confusion.
}, for the ABM $Z$ in $\HH$,
\[
V(z) := \EE_{z}\left[ \Im Z_{\sigma_F}; \sigma_F < \sigma_K \right],
\quad z \in D \setminus F.
\]

\begin{proposition}
\label{prop:BMD_to_ABM}
The function $V^\ast(z)$ satisfies
\begin{equation}
\label{eq:V_ast}
V^\ast(z) = V(z) + \sum_{j=1}^N \varphi^{(j)}(z) \sum_{k=1}^N \frac{M_{jk}}{1 - p_{kk}} \int_{\eta_k} V(z) \, \nu_k(dz),
\quad z \in D \setminus F.
\end{equation}
\end{proposition}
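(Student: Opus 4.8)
The plan is to deduce \eqref{eq:V_ast} from a decomposition of the BMD path at its first visit to the punctures $K^\ast$, combined with the solution of the finite linear system obeyed by the numbers $V^\ast(c^\ast_1),\dots,V^\ast(c^\ast_N)$.

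\textbf{Step 1 (fundamental decomposition).} First I would prove that
\[
V^\ast(z) = V(z) + \sum_{j=1}^N \varphi^{(j)}(z)\,V^\ast(c^\ast_j),
\qquad z \in D \setminus F.
\]
Split $\EE^\ast_z[\Im Z^\ast_{\sigma_F}]$ according to whether $\sigma_F<\sigma_{K^\ast}$ or $\sigma_{K^\ast}<\sigma_F$. Since $F$ and $K^\ast$ are disjoint closed sets in $D^\ast$ and the paths of $Z^\ast$ are continuous, these two events carry all of the contribution (on $\{\sigma_F=\infty\}$ the integrand vanishes by our convention). On $\{\sigma_F<\sigma_{K^\ast}\}$ the trajectory of $Z^\ast$ up to $\sigma_F$ never meets $K^\ast$; by the defining property of BMD the process killed upon leaving $D$ is an ABM in $D$, which is precisely the ABM $Z$ in $\HH$ killed at $\sigma_K$, and, as every point of $F$ has positive imaginary part, this gives
\[
\EE^\ast_z\!\bigl[\Im Z^\ast_{\sigma_F};\,\sigma_F<\sigma_{K^\ast}\bigr]
= \EE_z\!\bigl[\Im Z_{\sigma_F};\,\sigma_F<\sigma_K\bigr] = V(z).
\]
On $\{\sigma_{K^\ast}<\sigma_F\}$ the strong Markov property of $Z^\ast$ at $\sigma_{K^\ast}$, together with $Z^\ast_{\sigma_{K^\ast}}\in K^\ast$, gives $\EE^\ast_z[\Im Z^\ast_{\sigma_F};\,\sigma_{K^\ast}<\sigma_F]=\sum_{j=1}^N \varphi^{(j)}(z)\,V^\ast(c^\ast_j)$ directly from the definition of $\varphi^{(j)}$. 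All quantities here are bounded by $\Im\tilde F<\infty$, so there is no integrability issue.

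\textbf{Steps 2--3 (the linear system and its inversion).} Next I would apply the identity of Step 1 at points $z\in\eta_j$ and average it against $\nu_j$. Since $\eta_j$ surrounds $C_j$ and avoids $\tilde F\supset F$, starting from $c^\ast_j$ the process $Z^\ast$ must cross $\eta_j$ before it can reach $F$; hence the strong Markov property at $\sigma_{\eta_j}$ yields $V^\ast(c^\ast_j)=\int_{\eta_j}V^\ast(z)\,\nu_j(dz)$. Averaging the Step 1 identity over $\eta_j$ and invoking \eqref{eq:tp1} then gives, with $b_j:=\int_{\eta_j}V(z)\,\nu_j(dz)$,
\[
V^\ast(c^\ast_j) = b_j + \sum_{k=1}^N p_{jk}\,V^\ast(c^\ast_k),
\qquad j=1,\dots,N.
\]
Setting $v=(V^\ast(c^\ast_j))_{j=1}^N$ and recalling $q_{jj}=0$ and $q_{jk}=p_{jk}/(1-p_{jj})$ for $k\neq j$ (so in particular $p_{jj}<1$), this system is equivalent to $(1-p_{jj})\bigl((I-Q)v\bigr)_j=b_j$ for each $j$, hence to $(I-Q)v=\bigl(b_j/(1-p_{jj})\bigr)_{j=1}^N$. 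As $M=(I-Q)^{-1}$ exists (already recorded above), I get $V^\ast(c^\ast_j)=\sum_{k=1}^N \frac{M_{jk}}{1-p_{kk}}\int_{\eta_k}V(z)\,\nu_k(dz)$, and substituting this into the Step 1 decomposition produces exactly \eqref{eq:V_ast}.

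\textbf{Expected main obstacle.} The only point that genuinely needs care is the identification, in Step 1, of $\EE^\ast_z[\Im Z^\ast_{\sigma_F};\,\sigma_F<\sigma_{K^\ast}]$ with the ABM functional $V(z)$: one has to unwind the definition of BMD --- ``$Z^\ast$ leaves $D$'' means it is either killed at $\partial\HH$ or enters one of the punctures $c^\ast_j$, and the resulting killed process is $Z$ killed at $\sigma_K$ --- and attend to the bookkeeping at the cemetery, where on $\{\sigma_F=\infty\}$ we use $\Im Z^\ast_{\sigma_F}=0$, consistently with $(\Im Z^\ast_t)_{t\ge0}$ being a bounded supermartingale tending to $0$. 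Everything else is a routine strong-Markov computation on the finite state space $K^\ast$, all integrands being dominated by $\Im\tilde F$.
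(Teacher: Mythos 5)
Your proposal is correct and follows essentially the same route as the paper's proof: the first-passage decomposition at $\sigma_{K^\ast}$ versus $\sigma_F$ giving $V^\ast(z)=V(z)+\sum_j\varphi^{(j)}(z)V^\ast(c^\ast_j)$, averaging this identity over $\eta_k$ against $\nu_k$ via the strong Markov property to get the linear system for $(V^\ast(c^\ast_j))_j$, and inverting it through $M=(I-Q)^{-1}$. Your added care in Step~1 (identifying the pre-$\sigma_{K^\ast}$ contribution with the ABM functional $V$) and your explicit reading $q_{jj}=0$ only make precise what the paper leaves implicit.
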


\begin{proof}
For $z \in D \setminus F$,
\begin{align}
V^\ast(z)
&= \EE^\ast_{z}\left[ \Im Z^\ast_{\sigma_F}; \sigma_F < \sigma_{K^\ast} \right]
	+ \EE^\ast_{z}\left[ \Im Z^\ast_{\sigma_F}; \sigma_{K^\ast} < \sigma_F \right] \notag \\
&= V(z) + \sum_{j=1}^N V^\ast(c^\ast_j) \PP^\ast_{z}\left( Z^\ast_{\sigma_{K^\ast}} = c^\ast_j, \sigma_{K^\ast} < \sigma_F \right) \notag \\
&= V(z) + \sum_{j=1}^N \varphi^{(j)}(z) V^\ast(c^\ast_j).
\label{eq:V_z}
\end{align}
Integrating the both side by $\nu_k$ and using the strong Markov property,
we have
\[
V(c^\ast_k)
= \int_{\eta_k} V(z) \, \nu_k(dz) + \sum_{j=1}^N p_{kj} V^\ast(c^\ast_j).
\]
This is equivalent to
\[
\sum_{j=1}^N (\delta_{kj} - q_{kj}) V(c^\ast_j) = \frac{1}{1 - p_{kk}} \int_{\eta_k} V(z) \, \nu_k(dz).
\]
Hence we finally get
\begin{equation}
\label{eq:V_c}
V^\ast(c^\ast_j) = \sum_{k=1}^N \frac{M_{jk}}{1 - p_{kk}} \int_{\eta_k} V(z) \, \nu_k(dz).
\end{equation}
Substituting \eqref{eq:V_c} into \eqref{eq:V_z} yields \eqref{eq:V_ast}.
\end{proof}

We now consider the case that $F$ coincides with $F_n$ in Theorem~\ref{thm:main2}.
In this case, we write the above functions $V^\ast$, $V$, and $\varphi^{(j)}$ as $V^\ast_n$, $V_n$, and $\varphi_n^{(j)}$, respectively.
We also denote the above $p_{jk}$ and $M_{jk}$ by $p^n_{jk}$ and $M^n_{jk}$, respectively.

\begin{proposition}
\label{prop:conv3}
If
\begin{equation}
\label{eq:Hm_conv}
\Hm_{D \setminus F_n}(\zeta, \cdot) \xrightarrow{w} \Hm_{D \setminus F_\infty}(\zeta, \cdot)
\ \text{as}\ n \to \infty\ \text{for every}\ \zeta \in D \setminus \tilde{F},
\end{equation}
then $V_n(\zeta)$, $\varphi_n^{(j)}(\zeta)$, $p^n_{jk}$, and $M^n_{jk}$ converge to $V_\infty(\zeta)$, $\varphi_\infty^{(j)}(\zeta)$, $p^\infty_{jk}$, and $M^\infty_{jk}$, respectively, as $n \to \infty$.
\end{proposition}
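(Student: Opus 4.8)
The plan is to deduce the convergence of all four families from the single hypothesis \eqref{eq:Hm_conv}, handling them in the order $\varphi_n^{(j)}$, $V_n$, $p_{jk}^n$, $M_{jk}^n$, since each is built from the preceding ones. Two facts are used repeatedly. First, each $\Hm_{D \setminus F_n}(\zeta, \cdot)$, $n \in \NN \cup \{\infty\}$, is a probability measure, because the absorbed Brownian motion exits $D \setminus F_n = \HH \setminus (K \cup F_n)$ almost surely (one-dimensional Brownian motion hits the origin). Second, all of these measures are carried by the fixed closed set $S := \RR \cup K \cup \tilde{F}$, since $\partial(D \setminus F_n) \subset \RR \cup K \cup F_n \subset S$. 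Moreover, $K$ is a compact subset of $\HH$ disjoint from the relatively closed set $\tilde{F}$, so $K$ lies at a positive distance from $\RR \cup \tilde{F}$, and in particular each slit $C_j$ lies at a positive distance from $(\RR \cup K \cup \tilde{F}) \setminus C_j$.

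\emph{Convergence of $\varphi_n^{(j)}$.} Here $\varphi_n^{(j)}(\zeta) = \Hm_{D \setminus F_n}(\zeta, C_j)$, and the delicate point is that $C_j$, being a line segment, has empty interior and is \emph{not} a continuity set of $\Hm_{D \setminus F_\infty}(\zeta, \cdot)$, so \eqref{eq:Hm_conv} cannot be applied to it directly. The remedy is the separation just noted: both $C_j$ and its complement $A_j := S \setminus C_j$ within $S$ are closed. Applying the Portmanteau theorem to each of these two closed sets and using $\Hm_{D \setminus F_n}(\zeta, C_j) + \Hm_{D \setminus F_n}(\zeta, A_j) = 1$ for all $n \in \NN \cup \{\infty\}$, we get
\[
\limsup_{n \to \infty} \Hm_{D \setminus F_n}(\zeta, C_j) \le \Hm_{D \setminus F_\infty}(\zeta, C_j) = 1 - \Hm_{D \setminus F_\infty}(\zeta, A_j) \le \liminf_{n \to \infty} \Hm_{D \setminus F_n}(\zeta, C_j),
\]
so $\varphi_n^{(j)}(\zeta) \to \varphi_\infty^{(j)}(\zeta)$ for every $\zeta \in D \setminus \tilde{F}$.

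\emph{Convergence of $V_n$, $p_{jk}^n$, $M_{jk}^n$.} For $V_n$, fix a continuous $\chi \colon \CC \to [0,1]$ with $\chi = 1$ on $\overline{\tilde{F}}$ and $\chi = 0$ on $K$ (possible because $\overline{\tilde{F}}$ and $K$ are disjoint closed sets at positive distance, $K$ being compact), and set $g(z) := \chi(z) \max\{0, \min\{\Im z, \Im \tilde{F}\}\}$. Then $g$ is bounded and continuous, $g \equiv 0$ on $\RR \cup K$, and $g(z) = \Im z$ on $\overline{\tilde{F}} \supset \partial F_n$; a routine identification of the harmonic-measure integral with the defining expectation (using $F_n \subset \HH$, so real boundary points of $F_n$ carry no weight) then gives $V_n(\zeta) = \int_\CC g \, d\Hm_{D \setminus F_n}(\zeta, \cdot)$, whence \eqref{eq:Hm_conv} yields $V_n(\zeta) \to V_\infty(\zeta)$ for every $\zeta \in D \setminus \tilde{F}$. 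For $p_{jk}^n$, recall from \eqref{eq:tp1} that $p_{jk}^n = \int_{\eta_j} \varphi_n^{(k)}(z) \, \nu_j(dz)$; since $\eta_j \subset D \setminus \tilde{F}$, since $0 \le \varphi_n^{(k)} \le 1$, and since $\nu_j$ does not depend on $n$, dominated convergence together with the previous step gives $p_{jk}^n \to p_{jk}^\infty$. Finally, the bound $\varphi_n^{(j)}(z) \le \Hm_{\HH \setminus C_j}(z, C_j) =: \omega_j(z)$ (adding the other slits and $F_n$ as absorbing sets only lowers the chance of reaching $C_j$), together with $\max_{\eta_j} \omega_j < 1$ (compactness of $\eta_j$, and $\omega_j < 1$ on $\HH \setminus C_j$ because the absorbed Brownian motion exits $\HH$ before meeting the bounded set $C_j$ with positive probability), shows that $1 - p_{jj}^n \ge 1 - \max_{\eta_j} \omega_j > 0$ uniformly in $n$; hence $q_{jk}^n = p_{jk}^n / (1 - p_{jj}^n) \to q_{jk}^\infty$, that is, $Q^n \to Q^\infty$ entrywise, and since $I - Q^\infty$ is invertible, continuity of matrix inversion gives $M_{jk}^n = ((I - Q^n)^{-1})_{jk} \to M_{jk}^\infty$.

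The main obstacle is the convergence of $\varphi_n^{(j)}$: because $C_j$ is a segment, it carries positive harmonic mass yet has empty interior, so it is not a continuity set and \eqref{eq:Hm_conv} cannot be used on it directly. The resolution — exploiting that $C_j$ lies at a positive distance from the rest of the common support $S$, so that $C_j$ and its complement within $S$ are both closed, and then combining the two one-sided Portmanteau estimates via the total-mass-one constraint — is the only genuinely non-routine ingredient. The remaining steps (writing $V_n$ as an integral of a bounded continuous function, dominated convergence over the fixed curve $\eta_j$, and continuity of inversion on invertible matrices) are standard.
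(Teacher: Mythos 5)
Your proof is correct and follows the same overall strategy as the paper: deduce the convergence of each of the four quantities from the weak convergence \eqref{eq:Hm_conv}, using that all the harmonic measures are carried by the fixed closed set $S = \RR \cup K \cup \tilde{F}$ and that $C_j$ sits at positive distance from the rest of that set. The one place where you genuinely diverge is the step for $\varphi_n^{(j)}$: you observe that $C_j$ is not a continuity set and therefore run a two-sided Portmanteau argument on the closed sets $C_j$ and $S \setminus C_j$, combined with the total-mass-one constraint. The paper instead exploits the very same separation to build a bounded continuous $\psi_2$ equal to $1$ on $C_j$ and $0$ on the remainder of the common support, so that $\varphi_n^{(j)}(\zeta) = \int_\CC \psi_2 \, \Hm_{D\setminus F_n}(\zeta,dz)$ for every $n \in \NN \cup \{\infty\}$ and \eqref{eq:Hm_conv} applies verbatim, with the continuity-set issue never arising. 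The two mechanisms are equivalent in substance; the paper's is marginally shorter, while yours makes explicit why a naive application of weak convergence to the Borel set $C_j$ would fail, and your uniform bound $p^n_{jj} \le \max_{\eta_j} \Hm_{\HH \setminus C_j}(\cdot, C_j) < 1$ is a pleasant extra (the paper only needs $p^\infty_{jj} < 1$, which it already has from $q_{j0} > 0$). The remaining steps --- $V_n$ via a continuous truncation of $\Im z$ vanishing on $K$ and $\RR$, dominated convergence over the fixed curve $\eta_j$ in \eqref{eq:tp1} for $p^n_{jk}$, and continuity of matrix inversion for $M^n_{jk}$ --- coincide with the paper's.
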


\begin{proof}
Suppose \eqref{eq:Hm_conv}.
Let $\psi_1(z)$ be a bounded continuous function which is equal to $\Im z$ on $\tilde{F}$ and zero on $K$.
Then
\begin{align*}
\lim_{n \to \infty} V_n(\zeta)
&= \lim_{n \to \infty} \int_\CC \psi_1(z) \Hm_{D \setminus F_n}(\zeta, dz) \\
&= \int_\CC \psi_1(z) \Hm_{D \setminus F_\infty}(\zeta, dz)
= V_\infty(\zeta).
\end{align*}
In order to show $\lim_{n \to \infty} \varphi^{(j)}_n(\zeta) = \varphi^{(j)}_\infty(\zeta)$, we just replace $\psi_1(z)$ by a bounded continuous function $\psi_2(z)$ which takes value one on $C_j$ and zero on $\tilde{F} \cup (K \setminus C_j)$.
Since $0 \leq \varphi^{(j)}_n(\zeta) \leq 1$, the dominated convergence theorem applies to \eqref{eq:tp1}, which yields $p^n_{jk} \to p^\infty_{jk}$.
Finally, $M^n_{jk} \to M^\infty_{jk}$ follows from the above-mentioned construction.
\end{proof}

From Propositions~\ref{prop:BMD_to_ABM} and \ref{prop:conv3}, we get the following:

\begin{corollary}
\label{cor:Hm_to_EE}
The convergence of harmonic measures \eqref{eq:Hm_conv} implies that of expectations \eqref{eq:EE_conv2} in Proposition~\ref{prop:EE_conv2}.
\end{corollary}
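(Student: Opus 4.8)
The plan is to plug the hypothesis \eqref{eq:Hm_conv} into the representation formula \eqref{eq:V_ast} of Proposition~\ref{prop:BMD_to_ABM}, taken with $F = F_n$, and then to pass to the limit $n \to \infty$ term by term. Applied to $F_n$, that formula expresses $\EE^\ast_\zeta[\Im Z^\ast_{\sigma_{F_n}}]$ as
\[
V_n(\zeta) + \sum_{j=1}^N \varphi_n^{(j)}(\zeta) \sum_{k=1}^N \frac{M^n_{jk}}{1 - p^n_{kk}} \int_{\eta_k} V_n(w)\, \nu_k(dw), \quad \zeta \in D \setminus \tilde{F},
\]
and, by the same proposition, the corresponding expression with index $\infty$ equals $\EE^\ast_\zeta[\Im Z^\ast_{\sigma_{F_\infty}}]$. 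So the whole task reduces to showing that each factor appearing here converges to its $\infty$-counterpart.

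First I would invoke Proposition~\ref{prop:conv3}: since \eqref{eq:Hm_conv} is precisely its hypothesis, it immediately delivers $V_n(\zeta) \to V_\infty(\zeta)$, $\varphi_n^{(j)}(\zeta) \to \varphi_\infty^{(j)}(\zeta)$, $p^n_{kk} \to p^\infty_{kk}$, and $M^n_{jk} \to M^\infty_{jk}$ for the fixed $\zeta \in D \setminus \tilde{F}$. Because the Markov-chain construction of Section~\ref{sec:mc} applies equally to $F_\infty$ (which also lies in $\tilde{F}$), one has $p^\infty_{kk} < 1$, so the coefficients $M^n_{jk}/(1-p^n_{kk})$ converge to $M^\infty_{jk}/(1-p^\infty_{kk})$ with no vanishing denominator to worry about.

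The one piece not handed over directly by Proposition~\ref{prop:conv3} — and the step I would flag as the only one needing a little care — is the convergence of the line integrals $\int_{\eta_k} V_n(w)\,\nu_k(dw)$. Here I would exploit that each Jordan curve $\eta_k$ was chosen inside $D \setminus \tilde{F}$, so Proposition~\ref{prop:conv3} applies at every single point $w \in \eta_k$ and gives $V_n(w) \to V_\infty(w)$ pointwise on $\eta_k$; together with the uniform bound $0 \le V_n(w) \le \Im \tilde{F} < \infty$ (valid because $F_n \subset \tilde{F}$, so $\Im Z_{\sigma_{F_n}} \le \Im \tilde{F}$ on the event $\{\sigma_{F_n} < \sigma_K\}$) and the finiteness of the measure $\nu_k$ on $\eta_k$, the dominated convergence theorem gives $\int_{\eta_k} V_n\,d\nu_k \to \int_{\eta_k} V_\infty\,d\nu_k$. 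Finally, since $0 \le \varphi_n^{(j)} \le 1$ and all the sums over $j$ and $k$ are finite, the displayed expression — a finite sum of products of convergent sequences — converges to the analogous expression for $F_\infty$, which is $\EE^\ast_\zeta[\Im Z^\ast_{\sigma_{F_\infty}}]$. That is exactly \eqref{eq:EE_conv2}, and the proof is complete.
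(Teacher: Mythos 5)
Your proposal is correct and follows the same route as the paper, which derives the corollary directly by combining the representation formula \eqref{eq:V_ast} of Proposition~\ref{prop:BMD_to_ABM} with the convergences supplied by Proposition~\ref{prop:conv3}. The one step you flag --- passing to the limit in $\int_{\eta_k} V_n\,d\nu_k$ via dominated convergence using the bound $0 \le V_n \le \Im\tilde{F}$ and the fact that $\eta_k \subset D \setminus \tilde{F}$ --- is exactly the detail the paper leaves implicit, and you handle it correctly.
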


\subsection{Uniform regularity of slit domains}
\label{sec:ur}

In order to verify the convergence of harmonic measures \eqref{eq:Hm_conv},
we prove the uniform regularity of domains $D \setminus F_n$.

\begin{proposition}
\label{prop:ur}
Let $D$ and $F_n$, $n \in \NN \cup \{\infty\}$, be as in Theorem~\ref{thm:main2}.
Then the domains $D \setminus F_n$ are uniformly regular (in the sense of Definition~\ref{def:ur} in Appendix~\ref{sec:bry}).
\end{proposition}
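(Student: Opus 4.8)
The plan is to deduce the uniform regularity of $D\setminus F_n$ from the identity $D\setminus F_n=(\HH\setminus F_n)\cap(\HH\setminus K)$, which presents each $D\setminus F_n$ as the intersection of two domains we already control: the simply connected domains $\HH\setminus F_n$, uniformly regular by Proposition~\ref{prop:ur_sc}, and the single fixed slit domain $\HH\setminus K$. I would use uniform regularity (Definition~\ref{def:ur}) in the path-wise form: for every $\varepsilon>0$ there is $\delta>0$ such that $\PP_z(\tau_{B(z,\varepsilon)}<\tau_D)<\varepsilon$ whenever $D$ belongs to the family and $z\in D$ has $\dist(z,\partial D)<\delta$. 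This is equivalent to the harmonic-measure formulation of Proposition~\ref{prop:ur_sc}: one always has $\Hm_D(z,B(z,\varepsilon))\ge 1-\PP_z(\tau_{B(z,\varepsilon)}<\tau_D)$, and for simply connected $D\subsetneq\CC$ the converse bound $\PP_z(\tau_{B(z,\varepsilon)}<\tau_D)\le C\sqrt{\dist(z,\partial D)/\varepsilon}$ with an absolute constant $C$ follows from Beurling's projection theorem. The one mechanism driving the proof is monotonicity of the escape probability: if $D'\subseteq D''$ then $\tau_{D'}\le\tau_{D''}$, hence $\PP_z(\tau_{B(z,\varepsilon)}<\tau_{D'})\le\PP_z(\tau_{B(z,\varepsilon)}<\tau_{D''})$. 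Applying this along $D\setminus F_n\subseteq\HH\setminus F_n$ and $D\setminus F_n\subseteq\HH\setminus K$ reduces everything to the regularity of $\HH\setminus F_n$ and of $\HH\setminus K$.

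To carry this out I would first note that $\dist(K,\RR)>0$, since the segments $C_j$ lie in the open half-plane; consequently $\partial(\HH\setminus K)=\RR\cup K$, and from $\partial(A\cap B)\subseteq\partial A\cup\partial B$ we get
\[
\partial(D\setminus F_n)\subseteq\partial(\HH\setminus F_n)\cup\RR\cup K .
\]
Thus any $z\in D\setminus F_n$ with $\dist(z,\partial(D\setminus F_n))<\delta$ lies within $\delta$ of $\partial(\HH\setminus F_n)$, or of $\RR$, or of $K$. Fix $\varepsilon>0$. If $z$ is within $\delta$ of $\partial(\HH\setminus F_n)$, then $\PP_z(\tau_{B(z,\varepsilon)}<\tau_{\HH\setminus F_n})\le\omega_1(\delta)$, where $\omega_1(\delta)\to 0$ \emph{uniformly in $n$}, because the path-wise form of Proposition~\ref{prop:ur_sc} is uniform over all simply connected proper subdomains of $\CC$ and each $\HH\setminus F_n$ is one. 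If $z$ is within $\delta$ of $\RR$, then, using $D\setminus F_n\subseteq\HH$ and the explicit exit law of a half-disk, $\PP_z(\tau_{B(z,\varepsilon)}<\tau_{\HH})\le\omega_0(\delta)\to 0$. If $z$ is within $\delta$ of $K$, then, using $D\setminus F_n\subseteq\HH\setminus K$ and $\tau_{\HH\setminus K}\le\sigma_K$, we get $\PP_z(\tau_{B(z,\varepsilon)}<\tau_{\HH\setminus K})\le\PP_z(\tau_{B(z,\varepsilon)}<\sigma_K)\le\omega_2(\delta)\to 0$. By the monotonicity above, in every case $\PP_z(\tau_{B(z,\varepsilon)}<\tau_{D\setminus F_n})\le\max\{\omega_0(\delta),\omega_1(\delta),\omega_2(\delta)\}$, a bound independent of $n$ that tends to $0$ with $\delta$; choosing $\delta$ with this maximum below $\varepsilon$ completes the proof. (The separation $\dist(\tilde F,K)>0$ and the finiteness of $\Im\tilde F$ and $\hcap^D(\tilde F)$ are not needed for this proposition.)

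The hard part is the last case: the bound $\omega_2$, that is, the uniform regularity of the fixed domain $\HH\setminus K$ near the slits. Away from the endpoints of a $C_j$ this amounts to the regularity of a straight segment, which planar Brownian motion hits from nearby with probability tending to one, since a segment has positive logarithmic capacity; near an endpoint one must instead estimate harmonic measure at a genuine slit tip. The statement needed is: for a horizontal segment $C$, $\PP_z(\tau_{B(z,\varepsilon)}<\sigma_C)\to 0$ as $\dist(z,C)\to 0$, uniformly in the position of $z$ along $C$ and over the finitely many segments of $K$. This is precisely the hitting-probability lemma established in Appendix~\ref{sec:appendix}; granting it, the remainder is soft, relying only on the fact that passing to a subdomain can only decrease escape probabilities.
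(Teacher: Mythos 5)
Your route is genuinely different from the paper's and, in one respect, simpler. The paper works directly with the harmonic measure $\Hm_{D\setminus A_n}(z,B(z,\varepsilon))$ (where $A_n=F_n\cup\partial\HH$), splits according to whether the Brownian motion hits $A_n$ or $K$ first, and must then control the excursions that reach $K$ and return to exit near $z$; that is exactly what Proposition~\ref{prop:A1} of Appendix~\ref{sec:appendix} is proved for. By passing to the path-wise quantity $\PP_z\bigl(\tau_{B(z,\varepsilon)}\le\tau_{D\setminus F_n}\bigr)$, which dominates $1-\Hm_{D\setminus F_n}(z,B(z,\varepsilon))$ and is monotone under enlarging the domain, you bypass that excursion analysis entirely: for $z$ near $\partial(\HH\setminus F_n)$ the bound follows from the Beurling estimate $\PP_z\bigl(\tau_{B(z,\varepsilon)}<\tau_{\HH\setminus F_n}\bigr)\le C\sqrt{\dist(z,\partial(\HH\setminus F_n))/\varepsilon}$, uniformly in $n$. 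Note that the harmonic-measure form of Proposition~\ref{prop:ur_sc} as stated would \emph{not} suffice here (large harmonic measure of the ball does not bound the probability of escaping the ball before exiting the domain), but your Beurling derivation of the path-wise form is correct for these domains, whose complements are connected and unbounded. This half of your argument is sound.

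The problem is the case $\dist(z,K)<\delta$, which you yourself call the hard part and then discharge by asserting that the required estimate ``is precisely the hitting-probability lemma established in Appendix~\ref{sec:appendix}.'' It is not. Proposition~\ref{prop:A1} states that $\sup\{\PP_w(\sigma_{B(z,\varepsilon)}<\tau_{\overline{\HH}})\mathrel{;} z\in S,\ w\in K\}\to 0$ as $\varepsilon\to 0$: the starting point lies on $K$, the target is a ball of \emph{shrinking} radius centered near $\tilde{F}\cup\partial\HH$, and the limit is taken in $\varepsilon$. What you need is the opposite kind of statement, with $\varepsilon$ fixed and the starting point approaching $K$: $\sup\{\PP_z(\tau_{B(z,\varepsilon)}<\sigma_K)\mathrel{;}\dist(z,K)<\delta\}\to 0$ as $\delta\to 0$, i.e.\ the uniform regularity of $\CC\setminus K$ at the slits, including at their tips. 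As written, the crucial case therefore rests on a lemma that does not apply. The gap is real but easily filled --- indeed the paper's own proof fills it in its second case, establishing exactly this bound via Beurling's projection theorem applied to the circular projection of $C_j\cap\overline{B(z,\varepsilon)}$ onto the segment joining $z+\rho_z$ and $z+\varepsilon$ (see \eqref{eq:inner_bdry}); this is the same tool you already invoke for the simply connected case, but the citation must be corrected and the slit-tip estimate actually carried out (which also requires taking $\varepsilon$ small enough that $\overline{B(z,\varepsilon)}$ meets only one slit, of length greater than $2\varepsilon$, so your parenthetical claim that no separation hypotheses are needed is slightly too quick).
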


\begin{proof}
In this proof, we consider
\[
\tilde{A} := \tilde{F} \cup \partial \HH
\quad \text{and} \quad
A_n := F_n \cup \partial \HH,\ n \in \NN \cup \{\infty\},
\]
instead of $\tilde{F}$ and $F_n$.
Needless to say, $D \setminus F_n = D \setminus A_n$.

Since $K$ and $\tilde{A}$ are disjoint, we have $r := \dist(K, \tilde{A}) > 0$.
If $\varepsilon \in (0, r/4)$, then $B(z, \varepsilon)$ intersects only one of the sets $K$ and $A_n$.
Therefore, we can divide the proof of the uniform regularity into two cases:
\begin{enumerate}
\item \label{item:case1}
$B(z, \varepsilon) \cap A_n \neq \emptyset$ and $B(z, \varepsilon) \cap K = \emptyset$;
\item \label{item:case2}
$B(z, \varepsilon) \cap A_n = \emptyset$ and $B(z, \varepsilon) \cap K \neq \emptyset$.
\end{enumerate}
In what follows, let $Z = ((Z_t)_{t \geq 0}, (\PP_z)_{z \in \CC})$ be a complex Brownian motion.

\smallskip \noindent
\ref{item:case1})
We consider the case~\ref{item:case1}).
Since the domains $\HH \setminus A_n$ are uniformly regular by Proposition~\ref{prop:ur_sc},
there exists $\delta \in (0, \varepsilon)$ such that
\begin{equation}
\label{eq:ur_HH_A}
1 - \varepsilon
< \Hm_{\HH \setminus A_n}(z, B(z, \varepsilon))
= \PP_z\left(Z_{\sigma_{A_n}} \in B(z, \varepsilon)\right)
\end{equation}
for $z \in D \setminus A_n$ with $\dist(z, A_n) < \delta$.
We decompose the right-hand side as follows:
\begin{align}
&\PP_z\left(Z_{\sigma_{A_n}} \in B(z, \varepsilon)\right) \notag \\
&= \PP_z\left(Z_{\sigma_{A_n}} \in B(z, \varepsilon), \sigma_{A_n} < \sigma_K\right) + \PP_z\left(Z_{\sigma_{A_n}} \in B(z, \varepsilon), \sigma_K < \sigma_{A_n}\right) \notag \\
&= \Hm_{D \setminus A_n}(z, B(z, \varepsilon)) + \EE_z\left[\PP_{Z_{\sigma_K}}\left(Z_{\sigma_{A_n}} \in B(z, \varepsilon)\right); \sigma_K < \sigma_{A_n}\right] \notag \\
&\leq \Hm_{D \setminus A_n}(z, B(z, \varepsilon)) + \sup_{w \in K} \PP_w\left(Z_{\sigma_{A_n}} \in B(z, \varepsilon) \right) \notag \\
&\leq \Hm_{D \setminus A_n}(z, B(z, \varepsilon)) + \sup_{w \in K} \PP_w\left(\sigma_{B(z, \varepsilon)}<\tau_{\overline{\HH}} \right).
\label{eq:st_decomp}
\end{align}
In the last expression, the following uniform convergence is not difficult to see:
\begin{equation}
\label{eq:radius0}
\lim_{\varepsilon \to 0} \sup \left\{\, \PP_w\left(\sigma_{B(z, \varepsilon)}<\tau_{\overline{\HH}} \right) \mathrel{;} z \in \Bar{N}_\delta(\tilde{A}),\ w \in K \,\right\} = 0.
\end{equation}
Here, a closed subset
\[
\Bar{N}_\delta(\tilde{A}) := \{\, z \in \overline{\HH} \mathrel{;} \dist(z, \tilde{A}) \leq \delta \,\}
\]
of $\overline{\HH}$ is disjoint from $K$ by definition.
We provide the proof of \eqref{eq:radius0} for the sake of completeness in Appendix~\ref{sec:appendix}.
Finally, \eqref{eq:ur_HH_A}, \eqref{eq:st_decomp}, and \eqref{eq:radius0} ensure the condition for the uniform regularity in the case~\ref{item:case1}).

\smallskip \noindent
\ref{item:case2})
We consider the case~\ref{item:case2}).
In this case, we have
\begin{align*}
\Hm_{D \setminus A_n}(z, B(z, \varepsilon))
&= \PP_z \left(Z_{\sigma_K} \in B(z, \varepsilon); \sigma_K < \sigma_{A_n}\right) \\
&\geq \PP_z \left(Z_{\sigma_K} \in B(z, \varepsilon); \sigma_K < \sigma_{\tilde{A}}\right) \\
&\geq \PP_z \left( \sigma_K \leq \tau_{B(z, \varepsilon)} \right)
= \Hm_{B(z, \varepsilon) \setminus K}(z, K).
\end{align*}
Hence it suffices to show that,
for some fixed $\varepsilon_0 \in (0, r/4]$ and
for every $\varepsilon \in (0, \varepsilon_0)$,
\begin{equation}
\label{eq:inner_bdry}
\lim_{\delta \to 0} \inf \left\{\, \Hm_{B(z, \varepsilon) \setminus K}(z, K) \mathrel{;} z \in \CC,\ \dist(z, K) < \delta \,\right\} = 1.
\end{equation}
There may be several ways to prove this, and one way is as follows:
If $\varepsilon_0$ is small enough, then for every $\varepsilon \in (0, \varepsilon_0)$, the disk $\overline{B(z, \varepsilon)}$ intersects only one slit $C_j$ of $K$,
and the length of $C_j$ is greater than $2\varepsilon$.
For such an $\varepsilon$, let $\delta \in (0, \varepsilon)$.
For any point $z$ with $\rho_z := \dist(z, K) < \delta$,
we put $E := C_j \cap \overline{B(z, \varepsilon)}$.
In the disk $\overline{B(z, \varepsilon)}$,
we consider the circular projection
\[
\hat{E} := \{\, \lvert w - z \rvert + z \mathrel{;} w \in E \,\}
\]
of $E$ onto the horizontal radius.
Clearly, $\hat{E}$ is the line segment that connects $z + \rho_z$ and $z + \varepsilon$.
By Beurling's projection theorem (see, e.g., Garnett and Marshall~\cite[Theorem~9.2, Chapter~III]{GM05}),
we have
\begin{align*}
\Hm_{B(z, \varepsilon) \setminus K}(z, K)
&\geq \Hm_{B(z, \varepsilon) \setminus \hat{E}}(z, \hat{E})
= \frac{2}{\pi} \arctan \left[ \frac{1}{2}\left(\sqrt{\frac{\varepsilon}{\rho_z}} - \sqrt{\frac{\rho_z}{\varepsilon}}\right) \right] \\
&\geq \frac{2}{\pi} \arctan \left[ \frac{1}{2}\left(\sqrt{\frac{\varepsilon}{\delta}} - \sqrt{\frac{\delta}{\varepsilon}}\right) \right].
\end{align*}
The last expression goes to one as $\delta \to 0$.
This proves \eqref{eq:inner_bdry}.
\end{proof}

Proposition~\ref{prop:ur} and Theorem~\ref{thm:BRY19} immediately yield the next corollary.

\begin{corollary}
\label{cor:ur}
Let $D$ and $F_n$, $n \in \NN \cup \{\infty\}$, be as in Theorem~\ref{thm:main2}.
Then \eqref{eq:Hm_conv} holds, that is, $\Hm_{D \setminus F_n}(\zeta, \cdot)$ converges weakly to $\Hm_{D \setminus F_\infty}(\zeta, \cdot)$ for every $\zeta \in D \setminus \tilde{F}$.
\end{corollary}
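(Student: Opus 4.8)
The plan is simply to assemble the ingredients already at hand: the uniform regularity just established in Proposition~\ref{prop:ur}, the Carath\'eodory convergence of the complementary domains, and the general principle of Theorem~\ref{thm:BRY19} that these two facts together force the weak convergence of harmonic measures.

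First I would record that $D \setminus F_n$ converges to $D \setminus F_\infty$ in the kernel sense. This was already noted right after the statement of Theorem~\ref{thm:main2}: if $F_n \to F_\infty$ in the sense of Definition~\ref{def:hull_continuity}, i.e.\ $\HH \setminus F_n \to \HH \setminus F_\infty$ with respect to some $z_0 \in \HH \setminus \tilde{F}$, then checking the two conditions of Definition~\ref{def:kernel} for the sequence $D \setminus F_n$ is immediate, since removing the fixed compact slit set $K$ from each domain affects neither condition. Moreover, because each $D \setminus F_n$ is a (connected) domain, the kernel convergence holds with respect to any reference point of the limit domain $D \setminus F_\infty$; in particular it holds with respect to every $\zeta \in D \setminus \tilde{F} \subset D \setminus F_\infty$.

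Next I would invoke Proposition~\ref{prop:ur}, which says precisely that the whole family $\{\, D \setminus F_n \mathrel{;} n \in \NN \cup \{\infty\} \,\}$ is uniformly regular in the sense of Definition~\ref{def:ur}. With the Carath\'eodory convergence and the uniform regularity both in place, Theorem~\ref{thm:BRY19} applies verbatim and yields $\Hm_{D \setminus F_n}(\zeta, \cdot) \xrightarrow{w} \Hm_{D \setminus F_\infty}(\zeta, \cdot)$ for the chosen reference point $\zeta$. Since $\zeta \in D \setminus \tilde{F}$ was arbitrary, this is exactly \eqref{eq:Hm_conv}.

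There is essentially no obstacle here beyond bookkeeping: all the genuine work lies in Proposition~\ref{prop:ur} (and, upstream of it, in Proposition~\ref{prop:ur_sc}, the hitting-probability comparison leading to \eqref{eq:st_decomp}, the auxiliary bound \eqref{eq:radius0}, and the Beurling projection estimate). The only point I would be careful to state explicitly is the dependence on the reference point: one should either note, as above, that kernel convergence of connected domains is independent of the base point, or appeal to the fact that weak convergence of harmonic measures from a single point of a domain propagates to every point via Harnack's inequality together with the strong Markov property. Either route closes the argument, and both mirror how the simply connected case was handled in Proposition~\ref{prop:EE_conv}.
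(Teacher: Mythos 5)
Your proposal is correct and follows exactly the paper's route: the paper simply states that Proposition~\ref{prop:ur} and Theorem~\ref{thm:BRY19} immediately yield the corollary, with the Carath\'eodory convergence of $D \setminus F_n$ having been noted right after Theorem~\ref{thm:main2}. Your extra care about the reference point (each $\zeta \in D \setminus \tilde{F}$ lies in every $D \setminus F_n$ and can itself serve as the base point for the kernel convergence) is a reasonable piece of bookkeeping that the paper leaves implicit.
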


Corollaries~\ref{cor:Hm_to_EE} and \ref{cor:ur} prove Proposition~\ref{prop:EE_conv2} and hence Theorem~\ref{thm:main2}.

\section{Relation to geometric function theory}
\label{sec:cc_rem}

We give some remarks on Theorem~\ref{thm:main1} in the case that $\HH$-hulls are unbounded.
They are also the case with Theorem~\ref{thm:main2}.

\subsection{Half-plane capacity and angular residue at infinity}
\label{sec:rem1}

In view of geometric function theory, a natural way to define the half-plane capacity of an unbounded $\HH$-hull is to define it as the angular residue of the Riemann map at infinity.
More precisely, suppose that there exists a (unique) conformal mapping $f_F \colon \HH \to \HH \setminus F$ with the following two properties~\cite[Lemma~1~b)]{GB92}:
\begin{equation} \label{eq:hyd}
\lim_{\substack{z \to \infty \\ \Im z > \eta}}(f_F(z) - z) = 0
\quad \text{for any}\ \eta>0,
\end{equation}
and there exists $a_F \in \CC$ such that
\begin{equation} \label{eq:res}
\lim_{\substack{z \to \infty \\ \arg z \in (\theta, \pi-\theta)}}z(z - f_F(z)) = a_F
\quad \text{for any}\ \theta \in (0, \pi/2).
\end{equation}
The constant $a_F$ turns out to be non-negative and is called the angular residue at infinity.

It is known that the inverse mapping $f_F^{-1}$ has angular residue $-a_F$ at infinity.
Expressing the harmonic function $\Im (f_F^{-1}(z) - z)$ in terms of ABM, we can see that
\begin{itemize}
\item[A)] if  there exists a conformal mapping $f_F \colon \HH \to \HH \setminus F$ which enjoys \eqref{eq:hyd} and \eqref{eq:res}, then we have $\Im F < \infty$ and $\hcap(F) = a_F$.
\end{itemize}
In particular, the existence of the vertical limit~\eqref{eq:def} follows from that of the angular limit~\eqref{eq:res}.
On the other hand, it seems difficult to tell whether the following statement, which is the converse of A), is true or not:
\begin{itemize}
\item[B)] If the limit~\eqref{eq:def} exists, then there exists a conformal mapping $f_F \colon \HH \to \HH \setminus F$ which enjoys \eqref{eq:hyd} and \eqref{eq:res}.
\end{itemize}

In order to explain the difficulty of the above problem, suppose that the limit \eqref{eq:def} exists.
We can then take a conformal mapping $f \colon \HH \to \HH \setminus F$ with $f(\infty) = \infty$ in the sense of angular limit.
Compared with \eqref{eq:hyd} and \eqref{eq:res}, however, the last condition is too weak to relate the behavior of the inverse $f^{-1}(z)$ around $z=\infty$ to the quantity $\hcap(F)$.
Typical tools concerning holomorphic self-mappings in $\HH$ might be useful, such as the Pick--Nevanlinna integral representation, the Julia--Wolff--Carath\'eodory theorem, and so on,
but at this moment they do not directly imply that $f^{-1}(z)$ behaves well near $\infty$.

We note that some probabilistic methods could be available for constructing the conformal mapping $f_F$ above.
In the case that the hull $F$ is bounded, such a probabilistic construction of conformal mappings is studied by Lawler~\cite[Section~5.2]{La06} and by Chen, Fukushima and Rohde~\cite[Theorem~7.2]{CFR16}.
It will be a natural question whether we can obtain such results in the present case as well.

\subsection{Carath\'eodory convergence and locally uniform convergence}
\label{sec:rem2}

In the classical context, the Carath\'eodory convergence of domains (Definition~\ref{def:kernel}) is associated with the locally uniform convergence of the Riemann maps by the following theorem (see for example Rosenblum and Rovnyak~\cite[p.170]{RR94}):

\begin{theorem}[Carath\'eodory kernel theorem]
\label{thm:kernel}
Let $f_n$ be a conformal mapping from the unit disk $\mathbb{D}$ onto $D_n$ with $f_n(0) = 0$ and $f_n^\prime(0)>0$ for each $n \in \NN$.
Then the following are equivalent:
\begin{enumerate}
\item $(f_n)_{n \in \NN}$ converges to a non-constant function locally uniformly in $\mathbb{D}$;
\item $(D_n)_{n \in \NN}$ converges to a proper subdomain of $\CC$ in Carath\'eodory's sense with respect to the origin.
\end{enumerate}
\end{theorem}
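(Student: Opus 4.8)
The plan is to reduce the Carath\'eodory kernel theorem to two more tractable halves and to exploit the normalizations $f_n(0)=0$, $f_n'(0)>0$ together with classical distortion and compactness results for univalent functions. The central facts I would invoke are: (i) the Koebe $1/4$-theorem, which says $B(0,f_n'(0)/4)\subset D_n$, so that a uniform lower bound on $f_n'(0)$ follows from the existence of a fixed disk about the origin inside every $D_n$ (for all large $n$), and an upper bound follows from $D_n$ being eventually contained in a fixed bounded region; and (ii) Montel's theorem plus Hurwitz's theorem, which together guarantee that any sequence of normalized univalent maps has a locally uniformly convergent subsequence whose limit is either constant or again univalent.

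First I would prove the implication (a) $\Rightarrow$ (b). Assume $f_n \to f$ locally uniformly with $f$ non-constant; by Hurwitz's theorem $f$ is univalent, $f(0)=0$, $f'(0)>0$, and $D_\infty:=f(\mathbb{D})$ is a proper subdomain of $\CC$. For the first kernel condition, let $K\subset D_\infty$ be compact; then $f^{-1}(K)$ is a compact subset of $\mathbb{D}$, so it lies in some $\overline{B(0,\rho)}$ with $\rho<1$. On $\partial B(0,\rho)$ the continuous function $|f - f_n|$ tends uniformly to $0$, while $\dist(f(\partial B(0,\rho)), K)>0$; by Rouch\'e's theorem (applied to $f_n - w$ versus $f - w$ on $B(0,\rho)$ for each $w\in K$) every $w\in K$ is attained by $f_n$ on $B(0,\rho)$ once $n$ is large, hence $K\subset f_n(B(0,\rho))\subset D_n$. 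For the second condition, suppose a domain $U\ni 0$ satisfies $U\subset D_n$ for infinitely many $n$; I would show $U\subset D_\infty$ by taking any $w\in U$, joining $0$ to $w$ by a path in $U$, and using that $f_n^{-1}$ is defined on $U$ along this path for those $n$, is locally bounded (staying in $\mathbb{D}$), hence has a subsequential locally uniform limit $g$ on $U$ with $f\circ g = \mathrm{id}$; this forces $g(U)\subset\mathbb{D}$ and $f(g(w))=w$, i.e.\ $w\in D_\infty$.

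Next I would prove (b) $\Rightarrow$ (a). Assume $(D_n)$ converges to a kernel $D_\infty\subsetneq\CC$ with respect to $0$. The first kernel condition, applied to a small closed disk $\overline{B(0,\rho_0)}\subset D_\infty$, gives $\overline{B(0,\rho_0)}\subset D_n$ for large $n$, whence $f_n'(0)\geq \rho_0$ by Koebe. Since $D_\infty\neq\CC$, the second kernel condition prevents the $D_n$ from exhausting the plane, so (again via Koebe, or via a normal-families argument on $f_n^{-1}$) the $f_n'(0)$ are bounded above; on compact subsets of $\mathbb{D}$ the $f_n$ are then uniformly bounded by the growth theorem, so Montel applies. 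It remains to identify every subsequential limit $f$ of $(f_n)$: such an $f$ is univalent (by Hurwitz, using $f'(0)\geq\rho_0>0$), and the argument of the previous paragraph shows both $f(\mathbb{D})\supset$ (the interior of $D_\infty$, hence) $D_\infty$ and $f(\mathbb{D})\subset D_\infty$, so $f(\mathbb{D})=D_\infty$. Finally, two univalent maps $\mathbb{D}\to D_\infty$ fixing $0$ with positive derivative there differ by a rotation of $\mathbb{D}$ fixing $0$, hence coincide; so the limit is unique and the whole sequence converges.

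The main obstacle is the careful verification that the limit map surjects exactly onto $D_\infty$ — i.e.\ chaining together the two kernel conditions to get $f(\mathbb{D})=D_\infty$ rather than just one inclusion — and the attendant need to control $f_n^{-1}$ along paths in a subsequential-limit argument; this is where one genuinely uses that $D_\infty$ is a \emph{connected} set containing the reference point and that convergence is with respect to that point. The uniform bounds $\rho_0\leq f_n'(0)\leq C$ (keeping the limit non-constant and the family normal) are routine once Koebe and the growth theorem are invoked, as is the uniqueness step via the Schwarz lemma. I would also remark that for the paper's application one only needs the reflected, half-plane version of this theorem, which follows from the disk version by conjugating with a M\"obius map, but the statement as given is the standard one and the proof above is the classical one.
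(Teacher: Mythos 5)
The paper does not prove Theorem~\ref{thm:kernel}: it is quoted as classical background, with a pointer to Rosenblum and Rovnyak~\cite[p.170]{RR94}, and is not actually invoked in any of the paper's arguments (Section~\ref{sec:rem2} only uses it as a point of comparison). So there is no in-paper proof to measure yours against; what you have written is the standard textbook argument, and it is essentially sound: Hurwitz plus Rouch\'e for (a)~$\Rightarrow$~(b), normality of $(f_n^{-1})$ on a candidate domain $U$ for the second kernel condition, and Koebe/growth/Montel plus uniqueness of the normalized Riemann map for (b)~$\Rightarrow$~(a), with the full-sequence convergence recovered from the fact that every subsequential limit is the same map.

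Two small points deserve correction or explicit mention. First, the lower bound $f_n'(0)\ge\rho_0$ does not come from the Koebe $1/4$-theorem: Koebe runs in the opposite direction, producing the inclusion $B(0,f_n'(0)/4)\subset D_n$ from the value of the derivative. The bound you want follows from the Schwarz lemma applied to $f_n^{-1}$ restricted to $B(0,\rho_0)$, which gives $\lvert (f_n^{-1})'(0)\rvert\le 1/\rho_0$. (Your use of Koebe for the \emph{upper} bound on $f_n'(0)$ is correct: if $f_{n_k}'(0)\to\infty$ then every disk $B(0,R)$ lies in infinitely many $D_n$, forcing $D_\infty=\CC$.) Second, your step ``$f(\mathbb{D})=D_\infty$ for every subsequential limit'' is cleanest if you state and use the uniqueness of the Carath\'eodory kernel limit: the implication (a)~$\Rightarrow$~(b) applied to the convergent subsequence shows $D_{n_k}\to f(\mathbb{D})$ in the kernel sense, and since kernel limits with respect to a fixed reference point are unique (exhaust each candidate limit by subdomains $U\ni 0$ with $\overline{U}$ compact and apply the two conditions in turn), this pins down $f(\mathbb{D})=D_\infty$; this also establishes that $D_\infty$ is simply connected, which is needed before you can invoke uniqueness of the normalized conformal map onto $D_\infty$ in the final step. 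Neither issue is a genuine gap, only a matter of citing the right lemma at the right place.
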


Compared to Theorem~\ref{thm:kernel}, it is naturally expected that, for $\HH$-hulls $(F_n)_{n \in \NN}$ and the corresponding conformal mappings $(f_{F_n})_{n \in \NN}$ with \eqref{eq:hyd} and \eqref{eq:res}, the following are equivalent:
\begin{enumerate}
\renewcommand{\theenumi}{\alph{enumi}}
\renewcommand{\labelenumi}{$\text{\theenumi}^\prime$)}
\item \label{item:funct_conv2}
$(f_{F_n})_{n \in \NN}$ converges locally uniformly in $\HH$;
\item \label{item:hull_conv2}
$(F_n)_{n \in \NN}$ converges in the sense of Definition~\ref{def:hull_continuity}.
\end{enumerate}
However, this equivalence is not obvious from the classical Carath\'eodory kernel theorem.
The point is that, whereas the Riemann maps in Theorem~\ref{thm:kernel} fix the origin, an interior point of $\mathbb{D}$, the mappings $f_{F_n}$ fix the point at infinity, a boundary point of $\HH$.

Although we omit their details,
there are several variants of Carath\'eodory's kernel theorem known.
We here notice that, in the case that the $\HH$-hulls $F_n$ are uniformly bounded, the author gave a proof~\cite[Theorem~3.8]{Mu19spa} of the equivalence of $\text{\ref{item:funct_conv2}}^\prime$) and $\text{\ref{item:hull_conv2}}^\prime$).

\begin{remark} \label{rem:jw2}
We have seen that, at this moment, the probabilistic definition~\eqref{eq:def} of half-plane capacity is not completely the same as the analytic one~\eqref{eq:res} for unbounded $\HH$-hulls.
In the joint work~\cite{HHMS21+} referred to in Remark~\ref{rem:jw},
it is proved in an analytic way that, under assumptions like \eqref{eq:hyd} and \eqref{eq:res}, the locally uniform convergence $f_{F_n} \to f_{F_\infty}$ implies $a_{F_n} \to a_{F_\infty}$.
This statement is thus different than Theorem~\ref{thm:main1} from a technical point of view.
As it exceeds the scope of this article, we just point out this difference and do not go into the detail of such a technicality here.
\end{remark}

\appendix
\section{A result on hitting probability}
\label{sec:appendix}

Let $Z = ((Z_t)_{t \geq 0}, (\PP_z)_{z \in \CC})$
be a complex Brownian motion.
We recall and prove \eqref{eq:radius0}:

\begin{proposition}
\label{prop:A1}
Let $S$ be a closed set in $\overline{\HH}$ and $K$ be a compact set in $\HH$.
Suppose $S \cap K = \emptyset$ and $\Im S < \infty$.
Then
\begin{equation}
\label{eq:A1}
\lim_{\varepsilon \to 0} \sup \left\{\, \PP_w\left(\sigma_{B(z, \varepsilon)} < \tau_{\overline{\HH}}\right) \mathrel{;} z \in S,\ w \in K \,\right\} = 0.
\end{equation}
Here, $\sigma_{\cdot}$ and $\tau_{\cdot}$ denote the first hitting and exit times of $Z$, respectively. 
\end{proposition}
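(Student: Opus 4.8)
The plan is to exploit the fact that a tiny disk $B(z,\varepsilon)$ is a very ``thin'' target, so the Brownian motion started at a point $w \in K$—which is bounded away from $B(z,\varepsilon)$ once $\varepsilon$ is small—has small probability of hitting it before leaving $\overline{\HH}$. The clean way to make this quantitative is to compare with the harmonic measure of a circle in an annulus. First I would fix $\varepsilon_0>0$ small enough that $\dist(K, S)/2 > \varepsilon_0$ (possible since $S\cap K=\emptyset$, $K$ compact, and one may shrink to a bounded portion of $S$ because $\Im S<\infty$ forces the relevant part of $S$ to lie in a bounded region; points of $S$ far away are never within $\varepsilon$ of a hit started in the bounded set $K$ anyway). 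Then for $z \in S$ and $w \in K$ with $\varepsilon < \varepsilon_0$, the annulus $\{\varepsilon < |\zeta - z| < R\}$ with $R := \dist(K,S)/2$ separates $w$ from the disk $B(z,\varepsilon)$, so by the strong Markov property at $\sigma_{\partial B(z,R)}$ and optional stopping applied to the bounded harmonic function $\log|\zeta - z|$ in the annulus,
\[
\PP_w\bigl(\sigma_{B(z,\varepsilon)} < \sigma_{\partial B(z,R)}\bigr)
\le \sup_{|\zeta - z| = R} \PP_\zeta\bigl(\sigma_{B(z,\varepsilon)} < \sigma_{\partial B(z,R)}\bigr)
= \frac{\log R - \log R}{\log R - \log\varepsilon}
\]
— more precisely the standard annulus computation gives the bound $\dfrac{\log(R/R)}{\log(R/\varepsilon)}$; I mean the harmonic-measure estimate $\PP_\zeta(\sigma_{B(z,\varepsilon)}<\sigma_{\partial B(z,R')}) = \dfrac{\log(R'/|\zeta-z|)}{\log(R'/\varepsilon)}$ applied with $|\zeta - z| = R$ and some fixed $R' > \sup_{w\in K}|w - z|$ bounded uniformly in $z \in S$. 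The key point is that this is $O(1/\log(1/\varepsilon))$ uniformly in $z$ and $w$.

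The remaining step is to remove the auxiliary outer radius: since $\PP_w(\sigma_{B(z,\varepsilon)} < \tau_{\overline{\HH}}) \le \PP_w(\sigma_{B(z,\varepsilon)} < \sigma_{\partial B(z,R')})$ whenever the disk $\overline{B(z,R')}$ is chosen large enough to contain $K$ but we still want it finite—here I use that $K$ is compact, hence $\sup_{w \in K, z \in S}|w - z| =: L < \infty$ (again truncating $S$ to a bounded set using $\Im S < \infty$ and the fact that far-off points of $S$ are irrelevant), so I may take $R' = L+1$, a constant. Then
\[
\sup_{z \in S,\, w \in K} \PP_w\bigl(\sigma_{B(z,\varepsilon)} < \tau_{\overline{\HH}}\bigr)
\le \frac{\log\bigl((L+1)/R\bigr)}{\log\bigl((L+1)/\varepsilon\bigr)} \xrightarrow[\varepsilon \to 0]{} 0,
\]
where $R = \dist(K, S)/2 > 0$ is also a fixed positive constant. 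This gives \eqref{eq:A1}.

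The main obstacle I anticipate is not the Brownian estimate itself but the bookkeeping needed to justify the uniformity in $z \in S$ when $S$ is unbounded: one must argue that the supremum over all of $S$ is attained (or controlled) on a bounded subset. This is where the hypothesis $\Im S < \infty$ enters, together with the compactness of $K$: a Brownian motion started in $K$ and killed on exiting $\overline{\HH}$ stays within a bounded (random) region, and more to the point, for $|z|$ large relative to $K$ we have $\dist(K, B(z,\varepsilon)) \ge |z| - \mathrm{diam}(K) - \varepsilon$, so the annulus estimate already forces $\PP_w(\sigma_{B(z,\varepsilon)} < \tau_{\overline{\HH}}) \to 0$ as $|z| \to \infty$ uniformly. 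Hence it suffices to take the supremum over $z \in S \cap \overline{B(0, \Lambda)}$ for a suitable $\Lambda$, a compact set, and there the constants $R$, $L$, $R'$ above are genuinely uniform. Carrying out this truncation carefully—splitting into ``$z$ near $K$'' and ``$z$ far from $K$''—is the only slightly delicate part; everything else is the textbook annulus harmonic-measure computation.
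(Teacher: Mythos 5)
Your key step---the inequality $\PP_w\bigl(\sigma_{B(z,\varepsilon)} < \tau_{\overline{\HH}}\bigr) \le \PP_w\bigl(\sigma_{B(z,\varepsilon)} < \sigma_{\partial B(z,R')}\bigr)$ with a \emph{fixed} outer radius $R'$---is false, because the event on the left is not contained in the event on the right: a path started at $w$ may cross $\partial B(z,R')$, wander back while remaining in the upper half-plane, and only then hit $B(z,\varepsilon)$, before eventually reaching $\RR$. This is not a removable technicality. Planar Brownian motion is neighborhood-recurrent, so $\PP_w(\sigma_{B(z,\varepsilon)}<\infty)=1$ for every $\varepsilon>0$; the only mechanism that makes the quantity in \eqref{eq:A1} small is the killing at the real axis, and replacing $\tau_{\overline{\HH}}$ by the exit time from a bounded disk discards that mechanism entirely. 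Consequently the bound $\log(R'/R)/\log(R'/\varepsilon)$ does not control $\PP_w(\sigma_{B(z,\varepsilon)}<\tau_{\overline{\HH}})$. The same flaw undermines your treatment of far-away $z$ (``the annulus estimate already forces\dots''). Relatedly, you have mislocated the role of the hypothesis $\Im S<\infty$: you use it only to truncate $S$ to a bounded set, whereas its essential function is to guarantee that every point of $S$ lies within a uniformly bounded distance of the killing boundary $\RR$, which is what must replace your outer circle.

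For comparison, the paper's argument runs as follows. First, $S$ is reduced to a compact set $S'$ by repeated reflections across vertical lines: since $K$ lies in a vertical strip, the path must cross such a line before hitting a far-away $B(z,\varepsilon)$, and reflecting the target across that line can only increase the hitting probability while leaving $\tau_{\overline{\HH}}$ unchanged. Second, after translating $z$ to the origin, the exit time $\tau_{\overline{\HH}-z}$ is dominated by the exit time $\tau_H$ from the single larger half-plane $H:=\overline{\HH}-i\,\Im S$ (this is exactly where $\Im S<\infty$ enters), so that $\PP_w(\sigma_{B(z,\varepsilon)}<\tau_{\overline{\HH}})\le f_\varepsilon(w-z)$ with $f_\varepsilon(\zeta):=\PP_\zeta(\sigma_{B(0,\varepsilon)}<\tau_H)$. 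Third, $f_\varepsilon$ decreases pointwise to zero as $\varepsilon\downarrow 0$ because single points are polar, and Dini's theorem on the compact set $K-S'$ gives the required uniformity. Your annulus computation could in principle be repaired by counting the excursions of the path between $\partial B(z,R)$ and $\partial B(z,R')$ before $\tau_{\overline{\HH}}$ and showing that each return fails to occur with probability bounded below---again using $\Im S<\infty$ to keep $\RR$ uniformly close---but that argument is absent from your write-up, and without it the proof does not go through.
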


\begin{proof}
We reduce the problem to the case in which $S$ is compact.
Let $a>0$ be such that
$K \subset \{\, z \mathrel{;} \lvert \Re z \rvert < a \,\}$.
We take $\varepsilon_0>0$ so small that a compact set
\[
S^\prime := (S \cap \{\, z \mathrel{;} \lvert \Re z \rvert < a \,\}) \cup \{\, z \mathrel{;} a \leq \lvert \Re z \rvert \leq 2a + \varepsilon_0,\ 0 \leq \Im z \leq \Im S \,\}.
\]
satisfies $\dist(K, S^\prime) \ge \varepsilon_0$.
It follows that
\begin{equation}
\label{eq:A2}
\sup_{\substack{z \in S \\ w \in K}} \PP_w\left(\sigma_{B(z, \varepsilon)} < \tau_{\overline{\HH}}\right)
\leq \sup_{\substack{z \in S^\prime \\ w \in K}} \PP_w\left(\sigma_{B(z, \varepsilon)} < \tau_{\overline{\HH}}\right)
\quad \text{for}\ \varepsilon \in (0, \varepsilon_0).
\end{equation}
This inequality is seen as follows:
Let $L := \{\, z \mathrel{;} \Re z = 2a + \varepsilon_0 \,\}$.
For $z \in S$ with $2a + \varepsilon_0 < \Re z \leq 3a + \varepsilon_0$, we have
\begin{align*}
\PP_w\left(\sigma_{B(z, \varepsilon)} < \tau_{\overline{\HH}}\right)
&= \EE_w\left[ \PP_{Z_{\sigma_L}}\left(\sigma_{B(z, \varepsilon)} < \tau_{\overline{\HH}}\right) ; \sigma_L < \tau_{\overline{\HH}} \right] \\
&= \EE_w\left[ \PP_{Z_{\sigma_L}}\left(\sigma_{B(2a + \varepsilon_0 - \bar{z}, \varepsilon)} < \tau_{\overline{\HH}}\right) ; \sigma_L < \tau_{\overline{\HH}} \right] \\
&= \PP_w\left(\sigma_L < \sigma_{B(2a + \varepsilon_0 - \bar{z}, \varepsilon)} < \tau_{\overline{\HH}}\right)
\leq \PP_w\left(\sigma_{B(2a + \varepsilon_0 - \bar{z}, \varepsilon)} < \tau_{\overline{\HH}}\right).
\end{align*}
Hence
\[
\sup_{\substack{z \in S,\ 2a + \varepsilon_0 < \Re z \leq 3a + \varepsilon_0 \\ w \in K}} \PP_w\left(\sigma_{B(z, \varepsilon)} < \tau_{\overline{\HH}}\right)
\leq \sup_{\substack{z \in S^\prime \\ w \in K}} \PP_w\left(\sigma_{B(z, \varepsilon)} < \tau_{\overline{\HH}}\right).
\]
Repeating such a reflection argument, we can conclude \eqref{eq:A2}.

By virtue of \eqref{eq:A2}, it suffices to prove \eqref{eq:A1} with $S$ replaced by $S^\prime$.
We define half-planes
$\overline{\HH} - z := \{\, w - z \mathrel{;} w \in \overline{\HH} \,\}$, $z \in \CC$,
and $H := \overline{\HH} - i\Im S$.
Then
\[
\PP_w\left(\sigma_{B(z, \varepsilon)} < \tau_{\overline{\HH}}\right)
= \PP_{w-z}\left(\sigma_{B(0, \varepsilon)} < \tau_{\overline{\HH}-z}\right)
\leq \PP_{w-z}\left(\sigma_{B(0, \varepsilon)} < \tau_H\right).
\]
We now put $K - S^\prime := \{\, w - z \mathrel{;} w \in K,\ z \in S^\prime \,\}$
and $f_{\varepsilon}(\zeta) := \PP_\zeta\left(\sigma_{B(0, \varepsilon)} < \tau_{H}\right)$ for $\zeta \in K-S^\prime$.
The function $f_\varepsilon(\zeta)$ is continuous in $\zeta$ and decreases to zero as $\varepsilon \to 0$ for each $\zeta$.
By Dini's theorem, we have
$\lim_{\varepsilon \to 0} \sup_{\zeta \in K-S^\prime} f_{\varepsilon}(\zeta) = 0$,
which gives \eqref{eq:A1}.
\end{proof}

\section{Weak convergence of harmonic measures}
\label{sec:bry}

Following Binder, Rojas and Yampolsky~\cite[(3.1)]{BRY19},
we define the uniform regularity of domains as follows:

\begin{definition}[Uniform regularity]
\label{def:ur}
A collection $\mathcal{D}$ of proper subdomains of $\CC$ is said to be \emph{uniformly regular} if, for any $\varepsilon>0$ there exists $\delta \in (0, \varepsilon)$ such that every $D \in \mathcal{D}$ satisfies
\begin{equation}
\label{eq:ur}
\Hm_D(z, B(z, \varepsilon)) > 1 - \varepsilon
\quad \text{for any}\ z \in D\ \text{with}\ \dist(z, \partial D) < \delta.
\end{equation}
\end{definition}

Only is the Euclidean distance used in Definition~\ref{def:ur}.
This definition is slightly different from the original one, which involves the spherical distance.
In the subsequent argument, we use only the Euclidean distance and do not consider the spherical one.

The aim of this appendix is to provide a self-contained proof of the following theorem, which is part of Binder, Rojas and Yampolsky~\cite[Theorems~2.3 and 3.1]{BRY19}:

\begin{theorem}
\label{thm:BRY19}
Let $D_n$, $n \in \NN$, and $D_\infty$ be proper subdomains of $\CC$ which have a point $z_0$ in common.
Suppose that these domains are uniformly regular.
If $D_n$ converges to $D_\infty$ as $n \to \infty$ in Carath\'eodory's sense with respect to $z_0$,
then $\Hm_{D_n}(z_0, {\cdot})$ converges weakly to $\Hm_{D_\infty}(z_0, {\cdot})$.
\end{theorem}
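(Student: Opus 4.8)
The plan is to prove Theorem~\ref{thm:BRY19} by combining two ingredients: (i) tightness of the family of harmonic measures $\{\Hm_{D_n}(z_0, \cdot)\}_{n \in \NN}$ on the Riemann sphere, which reduces the statement to identifying all subsequential weak limits; and (ii) a characterization of the limit via the solution of the Dirichlet problem. Since the measures live on $\partial D_n \subset \CC \cup \{\infty\}$, which is compact in the sphere, tightness is automatic and every subsequence has a further subsequence along which $\Hm_{D_n}(z_0, \cdot)$ converges weakly to some probability measure $\mu$ supported on $\overline{\CC} \setminus D_\infty$ (the second kernel condition in Definition~\ref{def:kernel} forces the limiting support into the complement of $D_\infty$, modulo a short argument that no mass escapes into $D_\infty$, which is where uniform regularity enters). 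It then suffices to show $\mu = \Hm_{D_\infty}(z_0, \cdot)$.

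To identify $\mu$, I would fix an arbitrary bounded continuous $\varphi$ on $\overline{\CC}$ and compare $\int \varphi \, d\Hm_{D_n}(z_0, \cdot) = \EE_{z_0}^{D_n}[\varphi(Z_{\tau_{D_n}-})]$ with $\EE_{z_0}^{D_\infty}[\varphi(Z_{\tau_{D_\infty}-})]$ by running a single Brownian motion from $z_0$ and coupling the two exit behaviors. The first kernel condition---each compact $K \subset D_\infty$ lies in $D_n$ eventually---gives that $\tau_{D_n} \geq \tau_{K}$, so with high probability the path stays inside $D_n$ until it is close to $\partial D_\infty$. Uniform regularity of the $D_n$ is precisely what controls the final segment: once the Brownian motion is within $\delta$ of $\partial D_n$ it exits within a ball of radius $\varepsilon$ with probability $> 1-\varepsilon$, so the exit point $Z_{\tau_{D_n}-}$ is within $\varepsilon$ of $Z_{\tau_{D_\infty}-}$ except on an event of probability $O(\varepsilon)$ uniformly in $n$. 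Combining these, $\bigl| \EE_{z_0}^{D_n}[\varphi(Z_{\tau_{D_n}-})] - \EE_{z_0}^{D_\infty}[\varphi(Z_{\tau_{D_\infty}-})] \bigr|$ can be bounded by a modulus-of-continuity term for $\varphi$ plus an $O(\varepsilon)$ error, uniformly in large $n$; letting $n \to \infty$ then $\varepsilon \to 0$ yields $\int \varphi \, d\mu = \int \varphi \, d\Hm_{D_\infty}(z_0, \cdot)$, hence the claim.

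The main obstacle I anticipate is making the coupling argument rigorous near the boundary, specifically ruling out pathological behavior of $\tau_{D_n}$ relative to $\tau_{D_\infty}$. One has $\liminf \tau_{D_n} \geq \tau_{D_\infty}$ from the first kernel condition on a set of full measure, but $\tau_{D_n}$ could a priori be much larger than $\tau_{D_\infty}$ if $D_n$ fails to ``close up'' near parts of $\partial D_\infty$; this is exactly what the second kernel condition plus uniform regularity must exclude. Concretely, I would argue that if the path at time $\tau_{D_\infty}$ is at a regular point $w$ of $\partial D_\infty$, then for large $n$ the domain $D_n$ already has its boundary within $\varepsilon$ of $w$ (else a small disk around $w$ would sit in infinitely many $D_n$ and hence in $D_\infty$, contradiction), so $Z$ must exit $D_n$ shortly after $\tau_{D_\infty}$; uniform regularity converts ``$D_n$ has boundary nearby'' into ``$Z$ actually exits nearby with high probability.'' The measure-zero set of irregular boundary points of $D_\infty$ is negligible for $\Hm_{D_\infty}(z_0, \cdot)$, so it does not affect the limit.

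A secondary technical point is continuity of $\varphi$ at $\infty$: since both the pre-limit and limit measures may charge neighborhoods of $\infty$, one should work on the sphere throughout and use that $\dist(z_0, \partial D_n)$ is bounded below (as $z_0 \in D_n$ for all $n$ and, by the first kernel condition applied to $\{z_0\}$, actually $\dist(z_0, \partial D_n)$ stays bounded below along the relevant subsequence), so the initial segment of the coupling is harmless. With these pieces in place the proof is complete; the bookkeeping is routine once the near-boundary coupling is set up correctly.
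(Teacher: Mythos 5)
Your overall strategy---realize all the harmonic measures as exit distributions of a single Brownian motion started at $z_0$ and use uniform regularity to control the last leg of the path---is the same probabilistic idea as the paper's, but you route it through a direct comparison of $\tau_{D_n}$ with $\tau_{D_\infty}$, whereas the paper compares both $\Hm_{D_n}(z_0,\cdot)$ and $\Hm_{D_\infty}(z_0,\cdot)$ to the exit law from a fixed \emph{common interior approximation} $U_{\delta,r}$ (a connected component of $\{\,z\in D_\infty : \dist(z,\partial D_\infty)>\delta/4,\ \lvert z-z_0\rvert<r\,\}$), together with a recurrence lemma guaranteeing the Brownian motion leaves $U_{\delta,r}$ through the inner boundary $\Gamma_\delta$ rather than through the circle $\lvert z-z_0\rvert=r$. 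That device is not cosmetic: it turns ``$\partial D_n$ is $\delta$-close to where the path is about to exit'' into a statement about the \emph{compact} set $\Gamma_\delta\cap\partial U_{\delta,r}$, from which a single threshold $N$, valid for all large $n$ and all relevant exit positions, is extracted by a finite covering. In your version the exit point $w=Z_{\tau_{D_\infty}}$ is random, so your threshold depends on $\omega$; the ``$O(\varepsilon)$ error uniformly in $n$'' asserted in your second paragraph is therefore not delivered by your third paragraph, which only yields almost-sure (hence in-probability) convergence of the exit points. That is still enough for weak convergence, but the argument must be restructured around it; as written the uniformity claim is a gap.

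The one outright incorrect step is your application of the second kernel condition: ``else a small disk around $w$ would sit in infinitely many $D_n$ and hence in $D_\infty$.'' Definition~\ref{def:kernel} applies only to a \emph{domain containing $z_0$}; a disk $B(w,\delta)$ about a boundary point $w$ of $D_\infty$ contains neither $z_0$ nor is it a priori joined to $z_0$ inside infinitely many $D_n$. To get the contradiction you must glue $B(w,\delta)$ to a connected, compactly contained neighbourhood $V\ni z_0$ in $D_\infty$ that reaches into $B(w,\delta)$, so that $V\cup B(w,\delta)$ is a connected domain containing $z_0$ and contained in infinitely many $D_n$---which is exactly what the paper's Lemma~\ref{lem:int_approx} does with $U_{\delta,r}\cup B(z,\delta/2)$. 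You also need a case split at time $\tau_{D_\infty}$ (either $w\in D_n$ with $\dist(w,\partial D_n)<\delta$, where the strong Markov property and uniform regularity apply, or $\tau_{D_n}\le\tau_{D_\infty}$, where $\liminf_n\tau_{D_n}\ge\tau_{D_\infty}$ plus path continuity give closeness of the exit points), and you should note that uniform regularity already forces the complement of each $D_n$ to be non-polar, so $\tau_{D_\infty}<\infty$ a.s.\ and no mass escapes to $\infty$; this is what replaces the paper's Lemma~\ref{lem:recurrence} in your setup. With these repairs your argument closes, but the repairs essentially reconstruct the paper's two lemmas.
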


\begin{remark}
\label{rem:SW08}
The weak convergence of harmonic measures (in simply connected domains) was also proved under a different (but closely related) assumption in the earlier paper of Snipes and Ward~\cite[Remark~1]{SW08} in the context of harmonic measure distribution functions.
\end{remark}

In what follows, we keep the assumption of Theorem~\ref{thm:BRY19}.
Namely, $D_n$, $n \in \NN$, and $D_\infty$ are uniformly regular domains, and $D_n$ converges to $D_\infty$ in Carath\'eodory's sense with respect to a point $z_0$.

We construct a ``common interior approximation'' of $D_n$ for sufficiently large $n$ (cf.\ \cite[Definition~2.2]{BRY19}).
Let $\delta_0 := \dist(z_0, \partial D_\infty) > 0$.
For $\delta \in (0, \delta_0)$ and $r>0$, we define $U_{\delta, r}$ as the connected component of an open set
\[
\left\{\, z \in D_\infty \mathrel{;} \dist(z, \partial D_\infty) > \frac{\delta}{4},\ \lvert z - z_0 \rvert < r \,\right\}
\]
that contains $z_0$.
We also set
\[
\Gamma_\delta := \left\{\, z \in \CC \mathrel{;} \dist(z, \partial D_\infty) = \frac{\delta}{4} \,\right\}.
\]

\begin{lemma}
\label{lem:int_approx}
For every $\delta \in (0, \delta_0)$ and $r>0$, there exists $N \in \NN$ such that, for any $N < n \le \infty$, the domain $D_n$ enjoys
\begin{equation}
\label{eq:int_approx}
U_{\delta, r} \subset D_n \quad \text{and} \quad
\dist(z, \partial D_n) < \delta \ \text{for all}\ z \in \Gamma_\delta \cap \partial U_{\delta, r}.
\end{equation}
\end{lemma}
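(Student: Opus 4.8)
The plan is to establish the two assertions in \eqref{eq:int_approx} separately, using the Carath\'eodory convergence hypothesis together with compactness of the relevant closed sets. For the inclusion $U_{\delta, r} \subset D_n$: note that $\overline{U_{\delta, r}}$ is a compact subset of $D_\infty$, because every point of the closure satisfies $\dist(z, \partial D_\infty) \ge \delta/4 > 0$ and $\lvert z - z_0 \rvert \le r$, so it is bounded and bounded away from $\partial D_\infty$. By the first bullet of Definition~\ref{def:kernel}, the compact set $\overline{U_{\delta, r}}$ is contained in $D_n$ for all but finitely many $n$; taking $N_1$ large enough handles this part for all $N_1 < n \le \infty$ (the case $n = \infty$ being trivial since $U_{\delta, r} \subset D_\infty$ by construction).

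For the second assertion, consider the set $\Lambda := \Gamma_\delta \cap \partial U_{\delta, r}$. This is a closed subset of $\overline{U_{\delta,r}}$, hence compact, and every $z \in \Lambda$ satisfies $\dist(z, \partial D_\infty) = \delta/4$. I want to show that for large $n$ every such $z$ has $\dist(z, \partial D_n) < \delta$. Suppose not: then along a subsequence there are points $z_{n_k} \in \Lambda$ with $\dist(z_{n_k}, \partial D_{n_k}) \ge \delta$. By compactness of $\Lambda$ we may assume $z_{n_k} \to z_\ast \in \Lambda$, so $\dist(z_\ast, \partial D_\infty) = \delta/4$. For $k$ large, $\lvert z_{n_k} - z_\ast \rvert < \delta/4$, hence the ball $B(z_\ast, \delta/2) \subset B(z_{n_k}, 3\delta/4) \subset D_{n_k}$ (using the triangle inequality and $\dist(z_{n_k}, \partial D_{n_k}) \ge \delta$). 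The open ball $B(z_\ast, \delta/2)$ thus lies in $D_{n_k}$ for infinitely many $k$; however $B(z_\ast, \delta/2)$ need not contain $z_0$, so I cannot directly invoke the second bullet of Definition~\ref{def:kernel}. To fix this, I enlarge: since $U_{\delta, r}$ is connected and contains $z_0$, and $z_\ast \in \overline{U_{\delta,r}}$, I take a connected open set $U$ containing both $z_0$ and $B(z_\ast, \delta/2)$ and contained in $D_\infty$ enlarged appropriately — concretely, $U := U_{\delta, r} \cup B(z_\ast, \delta/2)$ works once I check it is contained in $D_{n_k}$ for infinitely many $k$, which follows from the inclusion $U_{\delta, r} \subset D_{n_k}$ (valid for large $k$ by the first part) together with $B(z_\ast, \delta/2) \subset D_{n_k}$. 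Then $U$ is a domain containing $z_0$ lying in $D_{n_k}$ for infinitely many $k$, so the second bullet of Definition~\ref{def:kernel} gives $U \subset D_\infty$; in particular $B(z_\ast, \delta/2) \subset D_\infty$, contradicting $\dist(z_\ast, \partial D_\infty) = \delta/4$.

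Combining the two parts, I take $N := \max\{N_1, N_2\}$ where $N_1$ comes from the inclusion step and $N_2$ from the (finitely many exceptional $n$ in the) contradiction argument, and \eqref{eq:int_approx} holds for all $N < n \le \infty$.

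The main obstacle is the bookkeeping in the second assertion: the naive application of the kernel-convergence definition fails because the small ball $B(z_\ast, \delta/2)$ need not contain the reference point $z_0$, so one must first use the already-established inclusion $U_{\delta, r} \subset D_n$ to connect the ball to $z_0$ within the domains $D_{n_k}$ before the second condition of Definition~\ref{def:kernel} can be applied. Once this connectivity issue is handled, the rest is a routine compactness-plus-subsequence argument.
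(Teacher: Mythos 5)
Your proof is correct and follows essentially the same route as the paper: both rest on the observation that if a ball of radius comparable to $\delta$ around a point of $\Gamma_\delta\cap\partial U_{\delta,r}$ were contained in infinitely many $D_n$, then $U_{\delta,r}$ together with that ball would be a connected domain containing $z_0$ and hence, by the second kernel condition, a subset of $D_\infty$ --- contradicting $\dist(\cdot,\partial D_\infty)=\delta/4$. The only difference is bookkeeping: the paper runs the argument pointwise and then uses a finite cover of $\Gamma_\delta\cap\partial U_{\delta,r}$ by $\delta/2$-balls plus the triangle inequality, whereas you use a convergent-subsequence contradiction; both are fine.
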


\begin{proof}
By definition, we have
\begin{equation}
\label{eq:int_approx_infty}
\overline{U_{\delta, r}} \Subset D_\infty
\quad \text{and} \quad
\dist(z, \partial D_\infty) = \frac{\delta}{4}
\ \text{for all}\ z \in \Gamma_\delta \cap \partial U_{\delta, r}.
\end{equation}
Since $D_n \to D_\infty$, there exists $N^\prime \in \NN$ such that $\overline{U_{\delta, r}} \subset \bigcap_{N^\prime \le n < \infty}D_n$.
We here see that, for a fixed $z \in \Gamma_\delta \cap \partial U_{\delta, r}$, there are only finitely many $n$ such that $\dist(z, \partial D_n) \ge \delta/2$;
Otherwise, $U_{\delta, r} \cup B(z, \delta/2)$ would be a subdomain of $D_n$ for infinitely many $n$,
and hence this domain would be a subset of $D_\infty$ by Definition~\ref{def:kernel}.
This contradicts \eqref{eq:int_approx_infty}.
In this way, we can define
\[
n(z) := \max \{\, n \ge N^\prime \mathrel{;} \dist(z, \partial D_n) \ge \delta/2 \,\}
\]
with the maximum set to be $N^\prime$ if this set is empty.

By the compactness, we can choose finitely many points $z_k \in \Gamma_\delta \cap \partial U_{\delta, r}$ so that $\Gamma_\delta \cap \partial U_{\delta, r} \subset \bigcup_k B(z_k, \delta/2)$.
Set $N := \max_k n(z_k)$.
For any $z \in \Gamma_\delta \cap \partial U_{\delta, r}$,
we can find $z_k$ with $z \in B(z_k, \delta/2)$, and hence
\[
\dist(z, \partial D_n)
\le \lvert z - z_k \rvert + \dist(z_k, \partial D_n)
<\frac{\delta}{2} + \frac{\delta}{2} = \delta
\]
for all $n > N$.
\end{proof}

Let $Z = ((Z_t)_{t \ge 0}, (\PP_z)_{z \in \CC})$ be a planar Brownian motion.
We write the exit time of $Z$ from $U_{\delta, r}$ as $\tau_{\delta, r} := \tau_{U_{\delta, r}}$.

\begin{lemma}
\label{lem:recurrence}
For every $\varepsilon>0$, there exists $R>0$ such that
\begin{equation}
\label{eq:recurrence}
\PP_{z_0}\left( Z_{\tau_{\delta, r}} \in \Gamma_\delta \right) \ge 1 -\varepsilon
\end{equation}
for all $\delta \in (0, \delta_0)$ and $r>R$.
\end{lemma}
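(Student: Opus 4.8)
The plan is to note that $Z$ can leave $U_{\delta,r}$ only through $\Gamma_\delta$ or through the outer sphere $\partial B(z_0,r)$, and then to bound the probability of the ``bad'' exit (through $\partial B(z_0,r)$) by a quantity that does not depend on $\delta$ and that tends to $0$ as $r\to\infty$; the required $R$ is then read off from that quantity alone, so the bound is automatically uniform in $\delta$. First I would record two elementary facts, writing $\sigma_{\cdot}$ for first hitting times of $Z$. No point $w\in\partial U_{\delta,r}$ can satisfy $\dist(w,\partial D_\infty)=0$, since then a whole neighbourhood of $w$ would lie in $\{\,\dist(\cdot,\partial D_\infty)<\delta/4\,\}$, which is disjoint from $U_{\delta,r}$; hence $\partial U_{\delta,r}\subset\Gamma_\delta\cup\partial B(z_0,r)$, and on the event $\{Z_{\tau_{\delta,r}}\notin\Gamma_\delta\}$ the process reaches $\partial B(z_0,r)$ before it ever touches $\Gamma_\delta$. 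Secondly, $t\mapsto\dist(Z_t,\partial D_\infty)$ is continuous and starts from $\dist(z_0,\partial D_\infty)=\delta_0>\delta/4$, so $Z$ cannot reach $\partial D_\infty$ without first passing through the level set $\Gamma_\delta$; that is, $\sigma_{\Gamma_\delta}\le\sigma_{\partial D_\infty}$ always (trivially so when $\sigma_{\partial D_\infty}=\infty$). Combining these,
\[
\PP_{z_0}\bigl(Z_{\tau_{\delta,r}}\notin\Gamma_\delta\bigr)\le\PP_{z_0}\bigl(\sigma_{\partial B(z_0,r)}<\sigma_{\Gamma_\delta}\bigr)\le\PP_{z_0}\bigl(\sigma_{\partial B(z_0,r)}<\sigma_{\partial D_\infty}\bigr),
\]
and the last expression is free of $\delta$.

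It then remains to show that $\PP_{z_0}(\sigma_{\partial B(z_0,r)}<\sigma_{\partial D_\infty})\to0$ as $r\to\infty$; once this is known one picks $R$ with $\PP_{z_0}(\sigma_{\partial B(z_0,R)}<\sigma_{\partial D_\infty})<\varepsilon$, and since the events $\{\sigma_{\partial B(z_0,r)}<\sigma_{\partial D_\infty}\}$ decrease in $r$, the estimate \eqref{eq:recurrence} holds for every $r>R$ and every $\delta\in(0,\delta_0)$ simultaneously. The convergence to $0$ is a monotone-limit argument: these events are nested decreasing in $r$, and $\sigma_{\partial B(z_0,r)}\to\infty$ almost surely by continuity of the paths, so their intersection over all $r$ is contained in $\{\sigma_{\partial D_\infty}=\infty\}$. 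Thus it suffices that $Z$ started at $z_0$ hits $\partial D_\infty$ almost surely. This is where uniform regularity of $D_\infty$ enters: picking any $w_1\in\partial D_\infty$ and a point $z\in D_\infty$ close enough to $w_1$ that $\dist(z,\partial D_\infty)$ is below the $\delta$ furnished by Definition~\ref{def:ur} for $\varepsilon=1/2$, we get $\PP_z(\tau_{D_\infty}<\infty)\ge\Hm_{D_\infty}(z,B(z,1/2))>1/2$, so $\partial D_\infty$ is non-polar; and a planar Brownian motion hits any non-polar set almost surely from every starting point (a consequence of the neighbourhood recurrence of planar Brownian motion), hence $\PP_{z_0}(\sigma_{\partial D_\infty}<\infty)=1$.

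I expect the hitting statement $\PP_{z_0}(\sigma_{\partial D_\infty}<\infty)=1$ to be the only delicate point, since it is the sole place where the hypothesis ``$D_\infty$ uniformly regular'', rather than merely ``$D_\infty\subsetneq\CC$'', is genuinely invoked; everything else is a direct manipulation of hitting times and one monotone passage to the limit. If one prefers not to quote the fact about non-polar sets, the same conclusion can be reached by a Borel--Cantelli argument over the successive returns of $Z$ to a small fixed ball centred at a point of $\partial D_\infty$, each return carrying, by uniform regularity, a probability bounded below of producing an exit from $D_\infty$.
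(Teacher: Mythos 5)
Your proof is correct and follows essentially the same route as the paper's: both arguments reduce the claim to the fact that planar Brownian motion started at $z_0$ almost surely hits the non-polar set $D_\infty^c$ (non-polarity coming from the regularity of $D_\infty$), combined with the observation that the path must cross $\Gamma_\delta$ before reaching $\partial D_\infty$, which is what makes the choice of $R$ uniform in $\delta$. The only differences are cosmetic: the paper runs the recurrence argument through a compact non-polar subset $K\subset D_\infty^c$ and the limit $\PP_{z_0}(\sigma_K<\sigma_{\partial B(z_0,r)})\to 1$, while you work with $\partial D_\infty$ directly via a monotone passage to the limit and spell out how uniform regularity yields non-polarity.
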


\begin{proof}
Since $D_\infty$ is a regular domain by assumption, its complement is non-polar.
Hence we can take a compact non-polar subset $K$ of $D_\infty^c$ (see for instance Port and Stone~\cite[Proposition~2.4, Chapter~2]{PS78}).
In two dimension, a non-polar set is recurrent for Brownian motion \cite[Propositions~2.9 and 2.10, Chapter~2]{PS78}, which means
\[
\lim_{r \to \infty}\PP_{z_0}\left(\sigma_K < \sigma_{\partial B(z_0, r)}\right) = \PP_{z_0}\left(\sigma_K < \infty\right) = 1.
\]
Here, the Brownian motion starting at $z_0$ has to hit $\Gamma_\delta$ before it hits $K \subset D_\infty^c$.
Therefore, for any $\varepsilon>0$, there exists $R>0$ such that, if $r>R$, then
\[
1 - \varepsilon
\le \PP_{z_0}\left(\sigma_K < \sigma_{\partial B(z_0, r)}\right)
\le \PP_{z_0}\left( Z_{\tau_{\delta, r}} \in \Gamma_\delta \right).
\qedhere
\]
\end{proof}

Let $\Lip_b(\CC)$ be the space of bounded Lipschitz functions in $\CC$ with norm
\[
\lVert f \rVert_{\Lip_b(\CC)}
:= \sup_{z \in \CC} \lvert f(z) \rvert + \sup_{\substack{z,w \in \CC \\ z \neq w}}\frac{\lvert f(z)-f(w) \rvert}{\lvert z-w \rvert}.
\]
A distance
\[
d(\mu, \nu)
:= \sup_{\substack{f \in \Lip_b(\CC) \\ \lVert f \rVert_{\Lip_b(\CC)} \le 1}}\left\lvert \int_{\CC} f \, d\mu - \int_{\CC} f \, d\nu \right\rvert,
\quad \mu, \nu \in \mathcal{P}(\CC),
\]
is known to metrize the weak topology of the space $\mathcal{P}(\CC)$ of Borel probability measures on $\CC$ (see for instance Dudley~\cite[Theorem~11.3.3]{Du02}).
Using this distance, we now prove Theorem~\ref{thm:BRY19}.

\begin{proof}[Proof of Theorem~\ref{thm:BRY19}]
Fix $\varepsilon>0$.
By Lemma~\ref{lem:recurrence} and the assumption of the uniform regularity,
we can take $\delta \in (0, \delta_0)$ and $r>0$ so that \eqref{eq:recurrence} and \eqref{eq:ur} with $D=D_n$ for all $n \in \NN \cup \{\infty\}$ hold.
For these $\delta$ and $r$, there exists $N \in \NN$ such that \eqref{eq:int_approx} holds for all $N < n \le \infty$ by Lemma~\ref{lem:int_approx}.

For $N < n \le \infty$ and $f \in \Lip_b(\CC)$ with $\lVert f \rVert_{\Lip_b(\CC)} \leq 1$,
let us consider the difference
\[
\left\lvert \int_{\CC} f(z) \Hm_{U_{\delta,r}}(z_0, dz) - \int_{\CC} f(z) \Hm_{D_n}(z_0, dz) \right\rvert
\le \EE_{z_0}\left[\lvert f(Z_{\tau_{\delta,r}}) - f(Z_{\tau_{D_n}})\rvert\right].
\]
Note that the random variable $\Delta_n := \lvert f(Z_{\tau_{\delta,r}}) - f(Z_{\tau_{D_n}}) \rvert$ satisfies
\[
\Delta_n \le \lVert f \rVert_{\Lip_b(\CC)}\lvert Z_{\tau_{\delta,r}}-Z_{\tau_{D_n}}\rvert \le \lvert Z_{\tau_{\delta,r}}-Z_{\tau_{D_n}}\rvert
\]
and
\[
\Delta_n \le 2\lVert f \rVert_{\Lip_b(\CC)} \le 2.
\]
We take the expectation of $\Delta_n$ on three disjoint events.
First,
\begin{equation}
\label{eq:EE_W1}
\EE_{z_0}\left[\Delta_n; Z_{\tau_{\delta,r}} \in \Gamma_\delta,\ \lvert Z_{\tau_{\delta,r}}-Z_{\tau_{D_n}}\rvert < \varepsilon\right]
\le \varepsilon.
\end{equation}
Second,
\begin{align}
&\EE_{z_0}\left[\Delta_n; Z_{\tau_{\delta,r}} \in \Gamma_\delta,\ \lvert Z_{\tau_{\delta,r}}-Z_{\tau_{D_n}}\rvert \ge \varepsilon\right] \notag \\
&\le 2 \PP_{z_0}\left(Z_{\tau_{\delta,r}} \in \Gamma_\delta,\ \lvert Z_{\tau_{\delta,r}}-Z_{\tau_{D_n}}\rvert \ge \varepsilon\right) \notag \\
&= 2 \EE_{z_0}\left[\Hm_{D_n}(Z_{\tau_{\delta,r}}, B(Z_{\tau_{\delta,r}}, \varepsilon)^c); Z_{\tau_{\delta,r}} \in \Gamma_\delta\right]
\le 2 \varepsilon. \label{eq:EE_W2}
\end{align}
Third,
\begin{equation}
\label{eq:EE_W3}
\EE_{z_0}\left[\Delta_n; Z_{\tau_{\delta,r}} \notin \Gamma_{\delta}\right]
\leq 2 \PP_{z_0}(Z_{\tau_{\delta,r}} \notin \Gamma_{\delta})
\le 2\varepsilon.
\end{equation}
Combining \eqref{eq:EE_W1}--\eqref{eq:EE_W3} yields $\EE_{z_0}[\Delta_n] \le 5\varepsilon$.
Since $f$ was arbitrary, we have
\[
d(\Hm_{U_{\delta,r}}(z_0, {\cdot}), \Hm_{D_n}(z_0, {\cdot})) \le 5\varepsilon
\]
for all $N < n \le \infty$.
Finally, it follows from the triangle inequality that
\[
d(\Hm_{D_n}(z_0, {\cdot}), \Hm_{D_\infty}(z_0, {\cdot})) \le 10\varepsilon
\]
for all $n > N$, which proves Theorem~\ref{thm:BRY19}.
\end{proof}


\end{document}